\documentclass[12pt]{amsart}
\usepackage{amsmath,amssymb,amsthm}
\usepackage{dsfont}
\usepackage{mathrsfs}
\usepackage[margin=3cm]{geometry}
\linespread{1.2}  
\usepackage{graphicx}\usepackage{subfigure}
\usepackage{multicol}
\usepackage{tikz}
\usepackage[all]{xy}
\usepackage{xfrac}
\usepackage{xcolor}
\definecolor{olive}{rgb}{0.3, 0.4, .1}
\definecolor{fore}{RGB}{249,242,215}
\definecolor{back}{RGB}{51,51,51}
\definecolor{title}{RGB}{255,0,90}
\definecolor{dgreen}{rgb}{0.,0.6,0.}
\definecolor{gold}{rgb}{1.,0.84,0.}
\definecolor{JungleGreen}{cmyk}{0.99,0,0.52,0}
\definecolor{BlueGreen}{cmyk}{0.85,0,0.33,0}
\definecolor{RawSienna}{cmyk}{0,0.72,1,0.45}
\definecolor{Magenta}{cmyk}{0,1,0,0}
\usepackage{url}
\usepackage{abstract}
\usepackage[T1]{fontenc}

\usepackage[bookmarksdepth=0]{hyperref}
\hypersetup{   
    colorlinks=true,
    citecolor=Magenta,
   linkcolor=blue,
    linktoc=all
                         }

\newtheorem{defn}{Definition}[section]
\newtheorem{thm}[defn]{Theorem}
\newtheorem{lem}[defn]{Lemma}
\newtheorem{prop}[defn]{Proposition}

\newtheorem{rmk}[defn]{Remark}

\newcommand{\R}{\mathds{R}}

\newcommand{\x}{\times}
\newcommand{\f}{\frac}
\newcommand{\D}{\mathcal{D}}
\newcommand{\K}{\mathds{k}}
\newcommand{\F}{\mathscr{F}}
\newcommand{\GG}{\mathscr{G}}
\newcommand{\PC}{\mathscr{P}_C}
\newcommand{\QC}{\mathscr{Q}_C}

\newcommand{\bu}{\bullet}
\newcommand{\s}{\mathscr{S}}

\newcommand{\EC}{\mathcal{E}_C}

\makeatletter
\@namedef{subjclassname@2020}{\textup{2020} Mathematics Subject Classification}
\makeatother

\title[Complete Projector]{Microlocal Projector for Complete Flow}

\author{Sheng-Fu Chiu}
\email{sfchiu@gate.sinica.edu.tw}
\address[Sheng-Fu Chiu]{Institute of Mathematics, Academia Sinica, Taiwan}

\subjclass[2020]{53E50, 57R17, 18G80}
\keywords{Microlocal projector, Hamiltonian dynamics, Derived Categories}

\begin{document} 

\maketitle

\begin{abstract}

In this note we generalize the construction of microlocal projector to the sublevel set of autonomous function with complete Hamiltonian flow under some mild conditions. Furthermore, we mention that the condition of being complete is optimal, by providing a non-example that violates the projector property.

\end{abstract}

\setcounter{tocdepth}{2}
\tableofcontents

\section{Introduction}

In symplectic geometry and topology one often considers Hamiltonian diffeomorphisms with compact supports on a symplectic manifold. There are natural reasons regarding such assumption. First, the flow of such Hamiltonian is always complete. Second, they are bounded deformations of the identity and thus distinguish symplectic diffeomorphisms from conformal symplectic diffeomorphisms. Third, the natural compact-open topology on the group of Hamiltonian diffeomorphisms is metrizable and has a countable basis. Nevertheless, relaxations of the condition of compact support are available under certain circumstances. 

Let $Q$ be a connected smooth manifold and denote the coordinates of its cotangent bundle $T^*Q$ by $(q,p)$. In \cite{Chiu17} the author constructed a microlocal projector and used it to prove a nonsqueezing result of contact balls. The existence of the microlocal ball projector relies on the geometric fact that on $T^*\R^n$ the Hamiltonian function $H_1(q,p)=q^2+p^2$  satisfies the following condition: 
\begin{center}
	For each $b\in\mathds{R}$, the set $H_1^{-1}((-\infty,b])$ is compact.
\end{center}
We also refer the reader to \cite[Definition 1.4]{KO21} where Kawasaki and Orita made the same assumption to the moment map on $T^*Q$ and studied its rigid fibers.

On the other hand, even the simplest linear motion in Hamiltonian dynamics has its Hamiltonian $H_2(q,p)=\frac{1}{2}p^2$, which does not satisfy the above condition. Instead it satisfies the following coercive condition:
\begin{center}
	For each compact set $K\subset Q$ and each $b\in\mathds{R}$,\\ the set $\{(q,p)|q\in K, H_2(q,p)\leq b\}$ is compact.
\end{center}
Likewisely in Lagrangian dynamics there are convex and superlinear conditions for the sake of the well-definition of the Legendre transformation.

One could also consider other Hamiltonians such as the superposition of two harmonic oscillators $H_3(q,p)=({q_1}^2+{p_1}^2)-2({q_2}^2+{p_2}^2)$. The function $H_3$ does not obey the above coercive condition, nor does it possess any compact level surface. That being said, it is easy to see that every single level surface of $H_3$ is foliated by compact invariant tori, and on each such leaf the Hamiltonian flow lives and lasts forever. 

A rather natural condition one could ask for is the following condition ($\star$):
\begin{defn}\label{star}
A time-independent Hamiltonian function $H:T^*Q\rightarrow \R$ satisfies the condition ($\star$) if:

\begin{enumerate}
	\item (Long-term Completness) The solution $h_s\in Ham(T^*Q)$ of the Hamiltonian equations $(\dot{q}(s),\dot{p}(s))= X_H(q(s),p(s))$ are defined for all $s\in\R$.\\
	\item (Short-term Separating) There is a connected open interval $J$, with $J=-J$, such that for any given $0<s\in J$ and $q_1,q_2\in Q$ there exists at most one Hamiltonian trajectory $\gamma$ from $T^*_{q_1}Q$ to $T^*_{q_2}Q$ with time $s$.
\end{enumerate}
\end{defn}

One sees that the above condition ($\star$) applies to many circumstances in symplectic geometry and dynamical systems. In this note we refine the argument in \cite{Chiu17} to construct microlocal projectors for those $H$ satisfying the condition ($\star$). Our result is:

\begin{thm}[See Theorem \ref{projector} for its definition of mircolocal projector]
 Suppose $H:T^*Q\rightarrow\mathds{R}$ is a function satisfying the condition ($\star$). Let $B_C=H^{-1}((-\infty,C))$ be an open sublevel set of level $C$. Then there exists a microlocal projector $\PC$ for $B_C$.
\end{thm}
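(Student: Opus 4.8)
The plan is to follow the architecture of \cite{Chiu17}, with the explicit rotation flow of $H_1=q^2+p^2$ and its concrete sheaf model replaced by the flow $h_s$ of a general $H$ and an abstract sheaf quantization of it; condition ($\star$) is then invoked exactly where \cite{Chiu17} used compactness of the sublevel sets of $H_1$.

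\emph{Step 1: quantizing the flow.} By ($\star$)(1) the family $\{h_s\}_{s\in\R}$ is a bona fide Hamiltonian isotopy of $T^*Q$. For $0<s$ in the separating interval $J$, ($\star$)(2) says that any $q_1,q_2\in Q$ are joined by at most one time-$s$ trajectory, i.e.\ the action (generating function) $A_s\colon U_s\subseteq Q\x Q\to\R$ of $h_s$ is \emph{single-valued} on the open locus $U_s$ of pairs joined by a trajectory. This single-valuedness is precisely the input that lets the Guillermou--Kashiwara--Schapira machine produce, for such $s$, a \emph{simple} kernel $\F_s\in\D(Q)$ --- locally modeled on $\K_{\{(q_1,q_2,t)\,:\,t+A_s(q_1,q_2)\ge 0\}}$ over $U_s$ and on the unit kernel elsewhere --- with microsupport the conic lift of $\operatorname{graph}(h_s)$. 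Assembling these for $s\in[0,\varepsilon)$, together with $\F_0=\K_{\Delta_Q\x[0,\infty)}$, gives a short-time quantization; the semigroup law $h_{s+s'}=h_s\circ h_{s'}$ and the GKS uniqueness theorem then propagate it, via $\F_{s+s'}\cong\F_s\circ\F_{s'}$, to a quantization $\F=\{\F_s\}_{s\in\R}$ of the entire flow.

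\emph{Step 2: the candidate projector.} Since $H$ is autonomous, $B_C=H^{-1}((-\infty,C))$ is $h_s$-invariant for all $s$; hence the part of $\operatorname{SS}(\F_s)$ lying over $B_C$ stays over $B_C$, and the kernels $\F_s$ compose with one another ``inside'' $B_C$. I would set $\PC$ to be the colimit, over $T\to+\infty$, of the kernels obtained by convolving $\F$ along the time direction over $[0,T]$ --- a telescoping colimit of convolution powers of $\F$ along the $t$-axis --- cut off at level $C$. In \cite{Chiu17} periodicity of $H_1$ made this colimit essentially finite and $\{H_1<1\}$ emerged because the $t$-holonomy gained per period is monotone in the $H_1$-level; here the same mechanism is run as a genuine colimit, with the level $C$ controlling the stabilization. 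The estimate $\operatorname{SS}(\PC)\cap\{\tau>0\}\subseteq\overline{B_C}$ (flowed out) then follows from the microsupport bound for each $\F_s$, $h_s$-invariance of $B_C$, and stability of microsupport under filtered colimits.

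\emph{Step 3: the projector axioms, and the main obstacle.} It remains to verify the properties encapsulated in Theorem \ref{projector}: idempotency $\PC\circ\PC\cong\PC$ reduces to the semigroup law $\F_s\circ\F_{s'}\cong\F_{s+s'}$ together with a reindexing of the defining colimit; the canonical morphism relating $\PC$ to the unit kernel is induced by the $s=0$ term; and the universal property --- $\PC\circ\GG\cong\GG$ precisely when $\GG$ is microsupported in $B_C$, while $\operatorname{SS}(\PC\circ\GG)\subseteq\overline{B_C}$ in general --- follows by testing against orbit-shifted copies of $\K_{\{t\ge 0\}}$ and by adjunction, identifying $\GG\mapsto\PC\circ\GG$ with the (co)localization onto the subcategory of objects microsupported in $B_C$. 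I expect the crux to lie at the two points where \cite{Chiu17} used compactness of $\{H_1\le b\}$ and we cannot: (a) \emph{existence and uniqueness of $\F$} for a non-compactly supported autonomous Hamiltonian, where ($\star$)(2) supplies exactly the local rigidity (single-valued generating function, hence simplicity of the $\F_s$) needed to run the GKS uniqueness argument microlocally and patch, while ($\star$)(1) guarantees a flow to quantize at every time; and (b) \emph{convergence of the colimit defining $\PC$} in $\D(Q)$, where one must show the microsupport contributions stabilize over every relatively compact piece of $T^*Q$ --- and completeness (no trajectory escapes to infinity in finite time) together with short-term separation (trajectories through a fixed pair of points do not accumulate) are precisely the hypotheses that keep the colimit from blowing up or collapsing. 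Coupling (a) and (b) near the boundary hypersurface $H^{-1}(C)$, and checking that $\PC$ is independent of the particular truncation scheme, is the technical heart.
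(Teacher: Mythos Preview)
Your Step~1 is essentially what the paper does: condition~($\star$)(2) yields a single-valued generating function $S(s;q_1,q_2)$ for small $s$, the local kernel $\K_{\{z+S\ge 0\}}$ is the short-time quantization, and the GKS gluing (together with completeness, so that $I=\R$) produces the global $\s$.

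The gap is in Step~2. Your candidate $\PC$ --- a ``telescoping colimit of convolution powers of $\F$ along the $t$-axis, cut off at level $C$'' --- has no mechanism for accessing the energy level $C$ from the time-parametrized family $\{\F_s\}$. The number $C$ lives on the energy axis, not the time axis; a colimit over $s\in[0,T]$ as $T\to\infty$ has no natural truncation at an energy value. In \cite{Chiu17} periodicity was incidental to this step; what actually produces the projector there, and here, is the \emph{Fourier--Sato--Tamarkin transform} in the time variable: one sets $\widehat{\s}=\s\bu_s\K_{\{z+st\ge 0\}}[1]$, so that the dual variable $t$ is microlocally conjugate to $s$ and hence identified with the (conserved) value of $H$. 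The projector is then simply $\PC=\widehat{\s}\circ_t\K_{\{t<C\}}$, and $\QC=\widehat{\s}\circ_t\K_{\{t\ge C\}}$; the triangle $\PC\to\K_\Delta\to\QC\xrightarrow{+1}$ comes from $\K_{\{t<C\}}\to\K_\R\to\K_{\{t\ge C\}}\xrightarrow{+1}$. Without this transform your construction never sees $C$.

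Consequently your Step~3 is aimed at the wrong target. Idempotency is not proved by ``semigroup law plus reindexing''; it is a formal consequence of a semiorthogonal decomposition $(\D_{B_C},\D,\D_{T^*Q\setminus B_C})$, which the paper establishes by two microsupport estimates. The first (convolution with $\QC$ lands in $\D_{T^*Q\setminus B_C}$) does not use completeness at all. Completeness enters only in the second: one computes the right adjoint $\EC$ of $(-)\bu\PC$ and shows $\EC(\F)\cong 0$ for $\F\in\D_{T^*Q\setminus B_C}$ by a microsupport calculation in which a sequence $\{p_2'\}$ must converge. The convergence holds because $(q_2,\lim p_2')=h_{-\tau}(q_1,-p_1)$ is \emph{defined} for the limiting time $-\tau$ --- exactly long-term completeness --- allowing one to pass the limit through $H$ and derive $t\ge C$, contradicting $t<C$. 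This is a pointwise $C^0$ argument, not a question of a colimit stabilizing; your diagnosis of ``where compactness was used and we cannot'' misses the actual location of the obstruction.
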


A motivation for complete Hamiltonian flow comes from quantum mechanics. By Stone's theorem the quantum Hamiltonian observable $\hat{H}$ is always given self-adjoint, and its evolution on the quantum system is always governed by the one-parameter unitary group $e^{is\hat{H}}$, which is automatically defined for all times $s\in\R$. The microlocal projector in a derived category, in a loose sense, is a classical geometric analogue of the quantum projection operator that plays an essential role in quantum measurements.

We would like to refer the reader to \cite{MVZ12} that partial quasi-morphisms and quasi-states, obtained from spectral invariants, can be defined for Hamiltonians with complete flow. The authors of \cite{MVZ12} also suggested a possible generalization of Mather's alpha function for such Hamiltonians. We hope that in the future our construction of microlocal projector could shed some light on this matter.

\section{Theory of microlocal sheaves}

\subsection{Sheaves and their microlocal supports}

Let $\K$ be a ring. Let $D(Q)$ be the derived category of sheaves of $\K$-modules on $Q$. For any object $\F\in\D(Q)$, Kashiwara and Schapira \cite{KS90} define its \emph{microsupport} (denoted by $SS(\F))$ to be the closure of those $(q_0,p_0)\in T^*Q$ such that there exists a $C^1$ function $\phi:Q\rightarrow\mathbb{R}$ with $d\phi(q_0)=p_0$ and satisfying the following ill propagation condition:
\begin{equation}
(R\Gamma_{\{q\in Q|\phi(q)\geq\phi(q_0)\}}\F)_{q_0}\ncong0 .
\end{equation}

Functorial relations between microsupports are extensively used in the determination of subcategories of certain sheaves. Their explanations and proofs can be found in \cite{GKS12,KS90,Tamarkin18}. For the convenience of the reader, we list several of them which take roles in the present paper.

Let $Y\xrightarrow{f} X$ be a morphism of smooth manifolds. It induces the natural morphisms 
$$
T^*X\xleftarrow{f_\pi} T^*X\x_X Y\xrightarrow{f^t} T^*Y.
$$

The microsupports of the images under Grothendieck's six functors($Rf_*, Cf_!, f^{-1}$, $f^!, \otimes, \underline{Rhom}$) are bounded by the microsupports of their preimages, via $f^t$ and $f_\pi$, in the sense of the following propositions:

\begin{prop}\label{push} \cite[Proposition 5.4.4]{KS90}
	Let $\GG\in D(Y)$ and assume $f$ is proper on $supp(\GG)$, then 
	$$
	SS(Rf_*\GG)=SS(Rf_!\GG)\subset f_\pi((f^t)^{-1}(SS(\GG))).
	$$
\end{prop}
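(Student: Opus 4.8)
The plan is to treat the two assertions separately. Since $f$ is proper on $supp(\GG)$, factoring through the closed embedding $supp(\GG)\hookrightarrow Y$ shows that the canonical morphism $Rf_!\GG\to Rf_*\GG$ is an isomorphism, so the equality $SS(Rf_*\GG)=SS(Rf_!\GG)$ is immediate and the substance is the inclusion. I would verify the inclusion through the defining test-function criterion: $(x_0,\xi_0)\notin SS(\F)$ precisely when $(x_0,\xi_0)$ has an open neighborhood $\Omega\subset T^*X$ such that $(R\Gamma_{\{\phi\geq\phi(x_1)\}}\F)_{x_1}=0$ for every $C^1$ function $\phi$ whose $1$-jet $(x_1,d\phi(x_1))$ lies in $\Omega$; in particular, if an open $\Omega$ is merely disjoint from $SS(\F)$, then this vanishing already holds for all $\phi$ with jet in $\Omega$, each point of $\Omega$ being surrounded by its own such good neighborhood. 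I would then reduce to two model cases by factoring $f$ through its graph, $Y\xrightarrow{\gamma}Y\times X\xrightarrow{p}X$ with $\gamma(y)=(y,f(y))$ a closed embedding and $p$ the second projection: since $f_\pi$ and $f^t$ compose in the evident way (a chain-rule bookkeeping on $T^*X\times_X Y$), the estimate for $f$ follows from the estimate for $\gamma$ together with that for $p$, and $p$ is proper on $\gamma(supp\,\GG)=supp(R\gamma_*\GG)$ precisely because $f$ is proper on $supp\,\GG$.

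For the closed-embedding step $i\colon Z\hookrightarrow M$: points off $Z$ are harmless because $supp(Ri_*\F)\subset Z$, and near a point of $Z$ one may take coordinates $M\cong Z\times\R^k$ with $Z=Z\times\{0\}$, so that (by the projection formula) $Ri_*\F$ is $\pi_Z^{-1}\F\otimes\K_{Z\times\{0\}}$ with $\pi_Z\colon Z\times\R^k\to Z$. The microsupport of $\pi_Z^{-1}\F$ is $SS(\F)$ carried to the slice with vanishing $\R^k$-component, that of $\K_{Z\times\{0\}}$ is the full conormal to the slice, and since these meet only along the zero section the estimate $SS(\mathcal{A}\otimes\mathcal{B})\subset SS(\mathcal{A})+SS(\mathcal{B})$ applies and produces exactly $i_\pi(i^t)^{-1}(SS(\F))$ --- all covectors of $M$ along $Z$ whose restriction to $TZ$ lies in $SS(\F)$.

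The heart of the proof --- and the step I expect to be the main obstacle --- is the projection $p\colon Y\times X\to X$, with $\GG':=R\gamma_*\GG$ and $p$ proper on $supp\,\GG'$. Fix $(x_0,\xi_0)\notin p_\pi(p^t)^{-1}(SS(\GG'))$, i.e.\ $((y,x_0),(0,\xi_0))\notin SS(\GG')$ for every $y$ with $(y,x_0)\in supp\,\GG'$. The fiber $p^{-1}(x_0)\cap supp\,\GG'$ is compact by properness and $SS(\GG')$ is closed; so, after shrinking, there are a relatively compact neighborhood $W$ of $(x_0,\xi_0)$ in $T^*X$ and a compact $L\subset Y$ with $supp\,\GG'\cap p^{-1}(\pi_X(W))\subset L\times X$ (here $\pi_X\colon T^*X\to X$), for which the compact set $\{((y,x_1),(0,\xi_1)):(x_1,\xi_1)\in\overline W,\ (y,x_1)\in supp\,\GG'\}$ avoids $SS(\GG')$; take an open $\Omega\subset T^*(Y\times X)$ around this set still disjoint from $SS(\GG')$. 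Then for any $\phi$ whose $1$-jet $(x_1,d\phi(x_1))$ lies in $W$, the commutation $R\Gamma_{\{\phi\geq\phi(x_1)\}}\circ Rp_*\cong Rp_*\circ R\Gamma_{\{\phi\circ p\geq\phi(x_1)\}}$ followed by proper base change yields
\[
\big(R\Gamma_{\{\phi\geq\phi(x_1)\}}Rp_*\GG'\big)_{x_1}\;\cong\;R\Gamma\big(p^{-1}(x_1);\,(R\Gamma_{\{\phi\circ p\geq\phi(x_1)\}}\GG')|_{p^{-1}(x_1)}\big),
\]
and at each $(y,x_1)\in p^{-1}(x_1)$ the $1$-jet of $\phi\circ p$ is $((y,x_1),(0,d\phi(x_1)))$, which lies in $\Omega$ whenever $(y,x_1)\in supp\,\GG'$ and hence avoids $SS(\GG')$; so every stalk of $(R\Gamma_{\{\phi\circ p\geq\phi(x_1)\}}\GG')|_{p^{-1}(x_1)}$ vanishes (on $supp\,\GG'$ by the criterion, off it trivially) and the displayed complex is $0$. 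Thus $(x_0,\xi_0)\notin SS(Rp_*\GG')$, which finishes the reduction and the proof.

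The delicate point is the uniformity in this last step: upgrading the pointwise non-characteristic condition over the single fiber $p^{-1}(x_0)$ to a genuine open neighborhood $\Omega$ of its $1$-jet image, on which the criterion can be applied for all nearby test functions at once. This upgrade rests entirely on the compactness of $p^{-1}(x_0)\cap supp\,\GG'$, and that is exactly where properness is indispensable: dropping it forces one back to $Rf_!$, and even that bound fails in general, as the open inclusion $(0,1)\hookrightarrow\R$ applied to the constant sheaf already shows. (Equivalently, one may route the propagation over the fiber through the non-characteristic deformation lemma, which packages the same compactness.)
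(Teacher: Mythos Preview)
The paper does not prove this proposition at all: it is stated with a citation to \cite[Proposition~5.4.4]{KS90} and used throughout as a black box, so there is no ``paper's own proof'' to compare against. Your argument is correct and is essentially the standard proof one finds in Kashiwara--Schapira: factor $f$ through its graph to reduce to a closed embedding followed by a projection, treat the embedding via local product coordinates and the tensor-product microsupport estimate, and treat the projection via proper base change together with the propagation criterion applied along the compact fiber. Your identification of the properness hypothesis as exactly what supplies the needed fiberwise compactness is also the right diagnosis.
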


\begin{prop}\label{pull} \cite[Proposition 5.4.13]{KS90}
	Let $\F\in D(X)$ and assume $f$ is non-characteristic for $SS(\F)$, i.e., $f_\pi^{-1} (SS(\F))\cap T_Y^*X\subset Y\x_X T_X^*X$. Then	
	\begin{enumerate}	
	\item $SS(f^{-1}\F)\subset(f^t)(f_\pi^{-1}(SS(\F)))$.
	
	\item $ f^{-1}\F \otimes \omega_{Y/X} \cong f^!\F$.
	\end{enumerate}
\end{prop}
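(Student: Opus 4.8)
\textbf{Proposed proof (following \cite{KS90}).}
Since both claims are local on $Y$, the plan is to reduce them to two model cases via the graph factorization $f=p\circ\Gamma_f$, where $\Gamma_f\colon Y\hookrightarrow Y\x X$, $y\mapsto(y,f(y))$, is a closed embedding and $p\colon Y\x X\to X$ is the (submersive) second projection, so that $f^{-1}=\Gamma_f^{-1}\circ p^{-1}$ and $f^!=\Gamma_f^!\circ p^!$. First I would record two elementary facts. One: relative dualizing complexes compose, $\omega_{Y/X}\cong\omega_{Y/(Y\x X)}\otimes\Gamma_f^{-1}\omega_{(Y\x X)/X}$. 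Two: a direct conormal computation shows that $f$ is non-characteristic for $\F$ if and only if $\Gamma_f$ is non-characteristic for $p^{-1}\F$ — the set $f_\pi^{-1}(SS(\F))\cap T^*_YX$ translates verbatim into the analogous set for $\Gamma_f$ and $p^{-1}\F$. One then checks that the bound in (1) is compatible with composition of morphisms, i.e.\ applying it to $p$ and then to $\Gamma_f$ lands inside $f^t\bigl(f_\pi^{-1}(SS(\F))\bigr)$ — a routine chase through the maps $f_\pi$, $f^t$ and their factorizations. Hence it suffices to prove (1) and (2) separately for the closed embedding $\Gamma_f$ and for the projection $p$.

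\emph{The projection case.} A submersion makes every object non-characteristic, and in suitable coordinates $p$ is a coordinate projection, so $p^{-1}\F\cong\K_Y\boxtimes\F$. Then (1) follows from the external-product estimate $SS(\K_Y\boxtimes\F)\subset SS(\K_Y)\x SS(\F)=0_{T^*Y}\x SS(\F)$, and one checks the right-hand side equals $p^t\bigl(p_\pi^{-1}(SS(\F))\bigr)$. For (2), the isomorphism $p^!\F\cong p^{-1}\F\otimes\omega_{(Y\x X)/X}$ is the unconditional comparison of $p^!$ and $p^{-1}$ for a smooth morphism, $\omega_{(Y\x X)/X}$ being the pull-back to $Y\x X$ of the invertible object $\mathrm{or}_Y[\dim Y]$; the hypothesis is not used here.

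\emph{The closed embedding case.} This is the heart of the argument. Let $i\colon Y\hookrightarrow X$ have codimension $d$ and satisfy $SS(\F)\cap T^*_YX\subset 0_X$. A tubular neighborhood identifies a neighborhood $T$ of $Y$ with the total space of the normal bundle; after shrinking, take $T\cong Y\x\R^d$ with $Y=Y\x\{0\}$ and projection $\pi\colon T\to Y$, so that $i$ is the zero section of $\pi$. The hypothesis says $SS(\F)$ contains no nonzero covector conormal to the slice $Y\x\{0\}$; since $SS(\F)$ is closed and conic, after shrinking $T$ we may assume $SS(\F|_T)$ contains no nonzero covector conormal to any slice $Y\x\{v\}$ — equivalently, no nonzero covector that vanishes in the $Y$-directions. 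The microsupport propagation theorem of \cite{KS90} (microlocal Morse theory) then gives $\F|_T\cong\pi^{-1}\bigl(\F|_Y\bigr)=\pi^{-1}i^{-1}\F$. Granting this, both conclusions are immediate. For (1): $SS(\pi^{-1}i^{-1}\F)=SS(i^{-1}\F)\x 0_{\R^d}$, and tracing this through $i_\pi$ and $i^t$ yields $SS(i^{-1}\F)=i^t\bigl(i_\pi^{-1}(SS(\F))\bigr)$, in particular the asserted inclusion. For (2): in the local model $\pi^{-1}\GG\cong\GG\boxtimes\K_{\R^d}$, so a K\"unneth computation gives $i^!\pi^{-1}\GG\cong\GG\otimes\mathrm{or}_{Y/X}[-d]\cong\GG\otimes\omega_{Y/X}$; taking $\GG=i^{-1}\F$ gives $i^!\F\cong i^{-1}\F\otimes\omega_{Y/X}$. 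Finally, reassembling through the graph factorization — using compatibility of the bound in (1) with composition, and of $(-)^!$ with twists by invertible sheaves together with the composition of relative dualizing complexes for (2) — gives both statements for a general $f$.

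\textbf{Main obstacle.} Everything outside the closed-embedding case is bookkeeping with the six operations and with conormal geometry. The one substantive step is the passage, for a closed embedding, from the pointwise non-characteristic condition along $Y$ to the assertion that $\F$ is, near $Y$, pulled back from $Y$ along the tubular projection; this is precisely where the microsupport propagation theorem enters, the crucial point being that closedness and conicity of $SS(\F)$ promote the hypothesis at the single slice $Y$ to one that holds on an entire tubular family of slices.
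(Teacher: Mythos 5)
The paper does not actually prove this statement --- it is quoted verbatim from \cite[Proposition 5.4.13]{KS90} --- so I am judging your reconstruction against the argument there. Your reduction via the graph factorization, the equivalence of the non-characteristic conditions for $f$ and for $\Gamma_f$, and the submersion case are all correct and are exactly how the reduction goes in \cite{KS90}. The gap is in the closed-embedding case, which you rightly identify as the heart of the matter: the claim that the non-characteristic condition forces $\F|_T\cong\pi^{-1}(i^{-1}\F)$ on a tubular neighborhood is false. The hypothesis only excludes from $SS(\F)$ the nonzero covectors that are \emph{purely} conormal to $Y$ (and only over points of $Y$); it says nothing about covectors with nonzero components in both the $Y$-direction and the normal direction, and no propagation theorem upgrades ``no purely conormal covectors'' to ``$SS(\F)\subset T^*Y\x 0_{\R^d}$ near $Y$,'' which is what $\F\cong\pi^{-1}(i^{-1}\F)$ would require. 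Concretely, take $X=\R^2$, $Y=\{x_2=0\}$, $\F=\K_{\{x_1+x_2\geq0\}}$: then $SS(\F)$ away from the zero section consists of positive multiples of $dx_1+dx_2$ over the line $x_1+x_2=0$, so the embedding is non-characteristic everywhere, yet $\F$ is not a pullback from $Y$ under $(x_1,x_2)\mapsto x_1$ on any neighborhood of $Y$. Your intermediate claim would moreover yield the equality $SS(i^{-1}\F)=i^t(i_\pi^{-1}(SS(\F)))$ and the horizontality of $SS(\F)$ near $Y$, both strictly stronger than the proposition, whose whole point is to control the restriction to $TY$ of \emph{mixed} covectors.

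The correct argument for a non-characteristic closed embedding (after reducing, by induction on codimension, to a hypersurface $Y=\{\psi=0\}$) is genuinely microlocal: one uses $\pm d\psi\notin SS(\F)$ over $Y$ together with the microlocal cut-off lemma and the microlocal Morse lemma of \cite{KS90} to compare $(R\Gamma_{\{\psi\geq0\}}\F)|_Y$, $\F|_Y$ and $(R\Gamma_Y\F)|_Y$; this produces both the microsupport bound for $i^{-1}\F$ (as restrictions to $TY$ of covectors of $SS(\F)$ over $Y$, not as a product set) and the isomorphism $i^!\F\cong i^{-1}\F\otimes\omega_{Y/X}$. In the example above one checks directly that $(R\Gamma_Y\F)|_Y\cong\K_{\{x_1\geq0\}}[-1]$, so conclusion (2) holds even though your structural claim fails --- the proposition is strictly subtler than ``locally a pullback from $Y$.''
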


\begin{prop}\label{tenhom} \cite[Proposition 5.4.14]{KS90} Let $\F,\GG\in D(X)$. Then
   \begin{enumerate}
	\item $    SS(\F)\cap SS(\GG)^a\subset T_X^*X\Rightarrow SS(\F\otimes\GG)\subset SS(\F)+SS(\GG)$.    
	\item $  SS(\F)\cap SS(\GG)\subset T_X^*X\Rightarrow SS(\underline{Rhom}(\F;\GG))\subset SS(\F)^a + SS(\GG)$.
   \end{enumerate}	
	
\end{prop}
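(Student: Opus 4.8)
The plan is to deduce both bounds from the corresponding \emph{external} estimates on $X\x X$ by restriction to the diagonal. Let $\delta\colon X\hookrightarrow X\x X$, $x\mapsto(x,x)$, be the diagonal embedding and $q_1,q_2\colon X\x X\to X$ the two projections. One has the canonical isomorphisms
$\F\otimes\GG\cong\delta^{-1}(q_1^{-1}\F\otimes q_2^{-1}\GG)$
and
$\underline{Rhom}(\F,\GG)\cong\delta^!\underline{Rhom}(q_1^{-1}\F,q_2^!\GG)$,
the second coming from $\delta^!\underline{Rhom}(\mathcal A,\mathcal B)\cong\underline{Rhom}(\delta^{-1}\mathcal A,\delta^!\mathcal B)$ together with $q_1\circ\delta=q_2\circ\delta=\mathrm{id}_X$. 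I will take as input the clean external microsupport estimates of \cite[\S 5.4]{KS90}, namely $SS(q_1^{-1}\F\otimes q_2^{-1}\GG)\subset SS(\F)\x SS(\GG)$ and $SS(\underline{Rhom}(q_1^{-1}\F,q_2^!\GG))\subset SS(\F)^a\x SS(\GG)$; since the $q_i$ are submersions these follow from Proposition \ref{pull} as soon as one knows the bound for a bare box product, which I would cite rather than reprove.

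Next I would record the geometry attached to $\delta$. Writing the associated maps $T^*(X\x X)\xleftarrow{\delta_\pi}T^*(X\x X)\x_{X\x X}X\xrightarrow{\delta^t}T^*X$ and identifying the middle space with triples $(x;\eta_1,\eta_2)$, one has $\delta_\pi(x;\eta_1,\eta_2)=((x,x);(\eta_1,\eta_2))$ and $\delta^t(x;\eta_1,\eta_2)=(x;\eta_1+\eta_2)$, while the conormal to the diagonal is $T_X^*(X\x X)=\{((x,x);(\eta,-\eta))\}$. For part (1): an element of $\delta_\pi^{-1}(SS(\F)\x SS(\GG))\cap T_X^*(X\x X)$ is of the form $(x;\eta,-\eta)$ with $(x,\eta)\in SS(\F)$ and $(x,-\eta)\in SS(\GG)$, i.e. $(x,\eta)\in SS(\F)\cap SS(\GG)^a$; the hypothesis of (1) forces $\eta=0$, so $\delta$ is non-characteristic for $SS(\F)\x SS(\GG)$. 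Proposition \ref{pull}(1) then gives
$SS(\F\otimes\GG)\subset\delta^t\big(\delta_\pi^{-1}(SS(\F)\x SS(\GG))\big)=\{(x;\eta_1+\eta_2):\eta_1\in SS(\F)_x,\ \eta_2\in SS(\GG)_x\}=SS(\F)+SS(\GG)$.

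For part (2) the same mechanism is applied to $\mathcal K:=\underline{Rhom}(q_1^{-1}\F,q_2^!\GG)$. Intersecting $\delta_\pi^{-1}(SS(\F)^a\x SS(\GG))$ with the conormal to the diagonal now yields points $(x;\eta,-\eta)$ with $(x,-\eta)\in SS(\F)$ and $(x,-\eta)\in SS(\GG)$, i.e. $(x,-\eta)\in SS(\F)\cap SS(\GG)$, so the hypothesis of (2) again forces $\eta=0$ and $\delta$ is non-characteristic. As $\delta$ is a closed embedding, Proposition \ref{pull}(2) identifies $\delta^!\mathcal K$ with $\delta^{-1}\mathcal K\otimes\omega_{X/X\x X}$, and $\omega_{X/X\x X}$ is (a shift of) the orientation local system of the normal bundle of the diagonal, hence invertible and locally constant, so tensoring with it does not change the microsupport (e.g. by part (1) applied with one locally constant factor). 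Therefore $SS(\underline{Rhom}(\F,\GG))=SS(\delta^{-1}\mathcal K)\subset\delta^t\big(\delta_\pi^{-1}(SS(\F)^a\x SS(\GG))\big)=SS(\F)^a+SS(\GG)$ by the same fiberwise-addition computation.

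The step I expect to be the crux is precisely this non-characteristic bookkeeping: one must pin down the conormal $T_X^*(X\x X)$ and the maps $\delta_\pi,\delta^t$ with the correct signs, and then observe that the two hypotheses $SS(\F)\cap SS(\GG)^a\subset T_X^*X$ and $SS(\F)\cap SS(\GG)\subset T_X^*X$ are not ad hoc but are \emph{exactly} the conditions making $\delta$ non-characteristic for the two external microsupports. Everything else --- the diagonal decompositions, the invariance of $SS$ under tensoring with $\omega_{X/X\x X}$, and the fact that $\delta^t\circ\delta_\pi^{-1}$ realises fiberwise addition of covectors --- is routine. One minor caveat: $SS$ is by definition closed whereas $SS(\F)+SS(\GG)$ need not be, so, as in \cite{KS90}, the containments are understood with the right-hand sides replaced by their closures if necessary.
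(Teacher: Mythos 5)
Your argument is correct: reducing to the external estimates on $X\x X$ and then applying the non-characteristic pullback estimate along the diagonal embedding, with the two hypotheses of the proposition being exactly the non-characteristicity conditions for $SS(\F)\x SS(\GG)$ and $SS(\F)^a\x SS(\GG)$ respectively, is precisely the proof of \cite[Proposition 5.4.14]{KS90}. The paper itself gives no proof of this statement---it is quoted as background directly from \cite{KS90}---so there is nothing to compare beyond noting that your reconstruction matches the argument in the cited source (your only slip is cosmetic: $q_1^{-1}\F\otimes q_2^{-1}\GG$ \emph{is} the external product, so its microsupport bound is the box-product estimate itself, an independent input rather than a consequence of Proposition \ref{pull}).
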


When it comes to the products and projections, a sharper control of microsupports is available. Let $E$ be a nontrivial finite-dimensional real vector space and $\pi:X\x E\rightarrow X$ be the projection map.  

\begin{prop}\label{projection} \cite[Corollary 3.4]{Tamarkin18}
	Let $\kappa:T^*X\x E\x E^\star\rightarrow T^*X\x E^\star$ be the projection and let $i:T^*X\rightarrow T^*X\x E^\star$ be the closed embedding $(x,\omega)\mapsto(x,\omega,0)$ of zero-section. Then for $\F\in D(X\x E)$ we have
	$$
	SS(R\pi_!\F),SS(R\pi_*\F)\subset i^{-1}\overline{\kappa(SS(\F))}.
	$$
\end{prop}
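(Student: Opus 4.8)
When $\pi$ is proper on $\mathrm{supp}(\F)$ the estimate is already contained in Proposition \ref{push}, and even without the closure: for the projection $\pi\colon X\x E\to X$ the map $\pi^t$ is the inclusion of the zero covector in the fibre direction, so $\pi_\pi\circ(\pi^t)^{-1}$ sends a subset of $T^*(X\x E)=T^*X\x E\x E^\star$ to the set of its points with vanishing $E^\star$-component and then forgets the point of $E$, which is exactly the operation $i^{-1}\kappa(\cdot)$. The content of the proposition is thus to cope with the non-properness of $\pi$ along the vector space $E$, and the plan I would follow is to trade this pushforward for a restriction, via the Fourier--Sato transform in the fibre direction.

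Concretely, I would proceed as follows. Let $\F^\wedge$ and $\F^\vee$ denote the two Fourier--Sato transforms of $\F$ along $E$, both objects of $D(X\x E^\star)$. Essentially by definition of the transform, their restrictions to the zero section $X\x\{0\}\subset X\x E^\star$ are identified with $R\pi_!\F$ and $R\pi_*\F$ respectively, up to shift and an orientation twist (over a field; for a general coefficient ring one imposes the usual hypotheses or argues separately). Since shifts do not change microsupports, it suffices to bound $SS(\GG|_{X\x\{0\}})$ in terms of $SS(\GG)$ for $\GG=\F^\wedge$ or $\F^\vee$. For this I would use that the Fourier--Sato transform acts on microsupports through the linear symplectomorphism $T^*E\cong T^*E^\star$, $(y,\eta)\mapsto(\eta,-y)$, so that the image of $SS(\F^\wedge)$ (or $SS(\F^\vee)$) under the natural projection $T^*(X\x E^\star)=T^*X\x E^\star\x E\to T^*X\x E^\star$ (forgetting the cotangent fibre $E^{\star\star}\cong E$) is exactly $\kappa(SS(\F))$. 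Combining this with the microlocal estimate for restriction of $\GG$ to the submanifold $X\x\{0\}$, in the form $SS(\GG|_{X\x\{0\}})\subset i^{-1}\big(\overline{\text{(that projection of }SS(\GG))}\big)$, yields $SS(R\pi_!\F),SS(R\pi_*\F)\subset i^{-1}\overline{\kappa(SS(\F))}$.

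The genuine obstacle is this last input. The inclusion $X\x\{0\}\hookrightarrow X\x E^\star$ is typically \emph{characteristic} for $SS(\F^\wedge)$ --- already because $SS(\F^\wedge)$ contains the zero section over $\mathrm{supp}(\F^\wedge)$ --- so Proposition \ref{pull} does not apply on the nose and one cannot avoid the closure. The way to get the correct estimate is to perturb: translate $\F^\wedge$ slightly in the $E^\star$-direction so that the restriction becomes non-characteristic, apply Proposition \ref{pull}, and then let the translation tend to $0$, the limit being responsible for the bar on the right-hand side; making this limiting argument rigorous (the non-characteristic deformation / cut-off machinery of Kashiwara--Schapira \cite{KS90}) is the heart of the matter, and one must in particular check that the closure lands outside $i^{-1}$ and not inside it, since $\overline{i^{-1}(\cdot)}\subset i^{-1}\overline{(\cdot)}$ can be strict. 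A self-contained alternative that merely trades this obstacle for a boundary analysis is to compactify $E$ to a closed ball $\bar E$ with smooth boundary hypersurface $\Sigma:=X\x\partial\bar E$, write $R\pi_!\F=R\bar\pi_!(j_!\F)$ and $R\pi_*\F=R\bar\pi_*(Rj_*\F)$ with $\bar\pi\colon X\x\bar E\to X$ now proper, apply Proposition \ref{push} to $\bar\pi$, and verify that the extra microlocal directions created along $\Sigma$ by $j_!$ and $Rj_*$ consist only of conormals to $\Sigma$ (killed when one intersects with the zero-$\bar E$-covector locus that survives $(\bar\pi^t)^{-1}$) together with limit points of $SS(\F)$ at infinity in $E$ that furnish precisely the closure in $i^{-1}\overline{\kappa(SS(\F))}$; there the delicate point becomes the boundary microsupport computation and the bookkeeping of the rescaling of $E^\star$-covectors as one approaches $\Sigma$.
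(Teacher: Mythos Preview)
The paper does not supply its own proof of this proposition; it is quoted from \cite[Corollary 3.4]{Tamarkin18}, so there is no in-paper argument to compare yours against.

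On the substance of your sketch: the Fourier--Sato route is a legitimate strategy and is in the spirit of Tamarkin's argument, but note that the classical Fourier--Sato equivalence and the identity $SS(\F^\wedge)=\chi(SS(\F))$ in \cite{KS90} are stated for $\R^+$-conic sheaves on $E$, whereas here $\F\in D(X\x E)$ is arbitrary. One still has $(\F^\wedge)|_{\eta=0}\cong R\pi_!\F$ regardless, but the microsupport identity can fail outside the conic setting, so you would need either a weaker inclusion (obtained from the kernel estimate for the integral transform defining $\F^\wedge$) or a preliminary conification step. Tamarkin's actual path is to first establish a general estimate with closure (his Theorem~3.2) and then specialise; the closure is built in from the start rather than recovered by a perturbation argument. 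Your compactification alternative is also workable, and as you say the delicate part --- the boundary contribution to $SS(j_!\F)$ and its image under $(\bar\pi^t)^{-1}$ --- is exactly where the closure enters. Either way you have correctly isolated the non-trivial input; just be aware of the conicity hypothesis if you pursue the first route verbatim.
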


When $E$ is replaced by a smooth manifold $Y$, the same argument shows the following: If an object $\F\in D(X\x Y)$ is nonsingular in a neighborhood of $\{(x,\omega)\}\x T_Y^*Y$ in $T^*(X \x Y)$, then its projection along $Y$ will remain nonsingular at $(x,\omega)$.

\subsection{The Tamarkin categories $\D$} It was shown in \cite{KS90} that for any $\F$ in $D(Q)$ the correpsonding microsupport $SS(\F)$ is always a closed $\R^+$-conic coisotropic subset of $T^*Q$. To tame the conic structure, Tamarkin \cite{Tamarkin18} considered certain subcategories of the category $D(X\x\R)$ as follows. Denote the coordinate of $T^*(Q\x\R)$ by $(q,p,z,\zeta)$ where $(q,p)\in T^*Q$, $z\in\R$, and $\zeta\in T^*_z \R$. Following \cite{Tamarkin18}, we consider the following full subcategory of $D(Q\x\R)$:
$$
 D_{\zeta\leq0}=\{\mathcal{F}\in D(Q\x\R) | SS(\F)\subset \{\zeta\leq0\}\}.
$$

\begin{defn}
The Tamarkin category is defined to be
\begin{equation}
\D(Q\x\R):= D_{\zeta\leq0}(Q\times\R)^{\perp,l}
\end{equation}
Here the left orthogonal complement is taken with respect to $Rhom$ in $D(Q\times\R)$.
\end{defn}

A remarkable feature of about Tamarkin category $\D(Q\x\R)$ is that there exists a quotient functor $D(Q\x\R)\rightarrow \D(Q\x\R)$. Moreover, this quotient functor can be represented by a kernel convolution.

Let $\F\in D(Q_1\x Q_2 \x\R)$ and $\GG \in D(Q_2\x Q_3 \x\R)$. We define their convolution to be
\begin{equation}
\F\bu_{Q_2}\GG= R\pi_{13\sigma !}({\pi_{Q12R1}}^{-1}\F\otimes{\pi_{Q23R2}}^{-1}\GG)\in D(Q_1\x Q_3\x\R)
\end{equation}
where $\pi_{Q12R1}:Q_1\x Q_2\x Q_3\x\R_1\x\R_2\rightarrow Q_1\x Q_2\x\R_1$ and $\pi_{Q23R2}:Q_1\x Q_2\x Q_3\x\R_1\x\R_2\rightarrow Q_2\x Q_3\x\R_2$ are projections and $\pi_{13\sigma}:Q_1\x Q_2\x Q_3\x\R_1\x\R_2\rightarrow Q_1\x Q_3\x \R$ maps $(q_1,q_2,q_3,z_1,z_2)$ to $(q_1,q_3,z_1+z_2)$.

When $Q_2$ and $Q_3$ are single points, by abuse of notation we denote by $\F\bu\GG$ the convolution product $\F\bu_{Q_2}\GG$ for $\F\in D(Q\x\R)$ and $\GG\in D(\R)$. Therefore convoluting with a given object of $D(\R)$ becomes a functor $D(Q\x\R)\rightarrow D(Q\x\R)$.

For any subset $A\subset \R$, let $\K_A\in D(\R)$ be the constant sheaf supported by $A$. We also denote $\K_0=\K_{\{z=0\}}$, $\K_{\geq c}=\K_{[c,\infty)}$ and $\K_{>c}=\K_{(c,\infty)}$. For any $\F\in D(Q\x\R)$ we have $\F\bu\K_0=\F$. Moreover:

\begin{prop}\cite[Proposition 2.1, Proposition 2.2]{Tamarkin18}\label{positive}
Convolution with the exact triangle
\begin{equation}
\K_{\geq0}\rightarrow\K_0\rightarrow\K_{>0}[1]\xrightarrow{+1}
\end{equation}
gives rise to the semiorthogonal decomposition with respect to the triple $(\D(Q\x\R),D(Q\x\R),D_{\zeta\leq0}(Q\times\R))$.
\end{prop}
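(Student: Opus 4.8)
The plan is to obtain the decomposition by convolving the displayed exact triangle with an arbitrary object, reading off its two pieces from the outer vertices of the resulting triangle, and then checking that semiorthogonality and the fact that this really is \emph{the} decomposition are formal. Up to rotation, $\K_{\ge0}\to\K_0\to\K_{>0}[1]\xrightarrow{+1}$ is the excision triangle $\K_{(0,\infty)}\to\K_{[0,\infty)}\to\K_{\{0\}}\xrightarrow{+1}$ of the closed point $\{0\}\subset[0,\infty)$ and its complement $(0,\infty)$; since $\F\bu(-)\colon D(\R)\to D(Q\x\R)$ is triangulated and $\F\bu\K_0=\F$, convolving produces a triangle functorial in $\F$,
$$\F\bu\K_{\ge0}\longrightarrow\F\longrightarrow\F\bu\K_{>0}[1]\xrightarrow{+1}.$$

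The proposition then reduces to two claims: (i) $\F\bu\K_{>0}\in D_{\zeta\le0}(Q\x\R)$, and (ii) $\F\bu\K_{\ge0}\in\D(Q\x\R)$. Granting these, semiorthogonality is automatic: $\D(Q\x\R)=D_{\zeta\le0}(Q\x\R)^{\perp,l}$ says $\underline{Rhom}(\GG,\mathcal H)=0$ for $\GG\in\D(Q\x\R)$ and $\mathcal H\in D_{\zeta\le0}(Q\x\R)$, so every degree-shifted $\mathrm{Hom}$ group vanishes after applying $R\Gamma(Q\x\R;-)$. And a functorial triangle whose ends lie in mutually semiorthogonal full triangulated subcategories is necessarily the semiorthogonal-decomposition triangle: splitting the triangle against the orthogonality shows $\F\bu\K_{>0}[1]=0$ when $\F\in\D(Q\x\R)$ and $\F\bu\K_{\ge0}=\F$ when $\F\in D_{\zeta\le0}(Q\x\R)$, so $\F\bu\K_{\ge0}$ and $\F\bu\K_{>0}[1]$ are the two projection functors and the triple $(\D(Q\x\R),D(Q\x\R),D_{\zeta\le0}(Q\x\R))$ is a semiorthogonal decomposition.

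Claim (i) is a microsupport estimate. A stalk computation gives $SS(\K_{>0})\subset\{\zeta\le0\}$ in $T^*\R$. Identifying $Q\x\R\x\R\cong(Q\x\R)\x\R$ so that $\sigma(q,z_1,z_2)=(q,z_1+z_2)$ becomes the projection onto the first factor, one has $\F\bu\K_{>0}=R\sigma_!(p_1^{-1}\F\otimes p_2^{-1}\K_{>0})$, and the transversality hypothesis of Proposition~\ref{tenhom} holds for this tensor product (the conflicting codirections meet only on the zero section). Proposition~\ref{projection} --- which requires no properness and applies to projections with a vector-space fibre (hence, by the extension noted after its statement, a manifold fibre) --- then bounds $SS(\F\bu\K_{>0})$ by the covectors surviving the projection along the fibre direction, i.e., those with $\zeta_1=\zeta_2$; since $SS(p_2^{-1}\K_{>0})$ forces $\zeta_2\le0$, this gives $SS(\F\bu\K_{>0})\subset\{\zeta\le0\}$, hence $\F\bu\K_{>0}[1]\in D_{\zeta\le0}(Q\x\R)$.

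Claim (ii) is the crux, and the step I expect to be the real obstacle: one must show $\underline{Rhom}(\F\bu\K_{\ge0},\mathcal H)=0$ for \emph{every} $\mathcal H$ with $SS(\mathcal H)\subset\{\zeta\le0\}$, which is strictly stronger than any microsupport bound on $\F\bu\K_{\ge0}$ itself --- for instance $\K_{Q\x\R}$ has zero-section microsupport but does not lie in $\D(Q\x\R)$. Using $\F\bu\K_{\ge0}=R\sigma_!(p_1^{-1}\F\otimes p_2^{-1}\K_{\ge0})$, the adjunction $\underline{Rhom}(R\sigma_!A,B)\cong R\sigma_*\underline{Rhom}(A,\sigma^!B)$, the identity $\sigma^!B\cong\sigma^{-1}B[1]$ ($\sigma$ being a submersion of relative dimension one with orientable fibres), and $p_2^{-1}\K_{\ge0}=\K_{\{z_2\ge0\}}$, I would reduce the claim to
$$R\sigma_*\,\underline{Rhom}\big(p_1^{-1}\F,\;R\Gamma_{\{z_2\ge0\}}\,\sigma^{-1}\mathcal H\big)=0 .$$
This is exactly where $SS(\mathcal H)\subset\{\zeta\le0\}$ must enter, as a microlocal cut-off: because $\mathcal H$ propagates toward decreasing $z$, truncating $\sigma^{-1}\mathcal H$ by supports in $\{z_2\ge0\}$ and integrating along the antidiagonals $\{z_1+z_2=z\}$ annihilates it. I would isolate the resulting one-variable statement on $\R$ and deduce it from the microlocal cut-off lemma of Kashiwara--Schapira \cite[\S5.2]{KS90}. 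With (i) and (ii) in hand, uniqueness of semiorthogonal decompositions identifies $\F\bu\K_{\ge0}$ and $\F\bu\K_{>0}[1]$ with the projection functors, proving the proposition; the only delicate point is pinning down the cut-off lemma in exactly the form required, everything else being formal manipulation with distinguished triangles, adjoints, and the microsupport estimates already quoted.
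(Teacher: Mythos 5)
The paper does not prove this proposition --- it is imported verbatim from \cite{Tamarkin18} --- so there is no internal proof to compare against; what you have written is, in outline, Tamarkin's own argument. The formal skeleton is right: the displayed triangle is the rotated excision triangle of $\{0\}\subset[0,\infty)$, convolution is triangulated with $(-)\bu\K_0=\mathrm{id}$, and once one knows (i) $\F\bu\K_{>0}\in D_{\zeta\le0}$ and (ii) $\F\bu\K_{\ge0}\in\D={D_{\zeta\leq0}}^{\perp,l}$, both the semiorthogonality and the identification of the two vertices with the projection functors are formal, exactly as you say. Claim (i) is correct as argued: $SS(\K_{(0,\infty)})\subset\{\zeta\le0\}$, the hypothesis of Proposition \ref{tenhom} holds because the two pullbacks carry nonzero codirections only in complementary factors, and Proposition \ref{projection} forces the surviving covector to satisfy $\zeta=\zeta_1=\zeta_2\le0$.

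The genuine gap is in claim (ii), which you correctly single out as the crux but do not close: after the adjunction you are left needing the vanishing of $R\sigma_*\,\underline{Rhom}(p_1^{-1}\F,R\Gamma_{\{z_2\ge0\}}\sigma^!\mathcal{H})$ for \emph{all} $\F$, and you defer to ``the cut-off lemma in exactly the form required'' without saying what that form is. It is cleaner to discharge the dependence on $\F$ first: $Rhom(\F\bu\K_{\ge0},\mathcal{H})\cong Rhom(\F,\hom^{\star}(\K_{\ge0},\mathcal{H}))$, so it suffices to prove $\hom^{\star}(\K_{\ge0},\mathcal{H})\cong0$ whenever $SS(\mathcal{H})\subset\{\zeta\le0\}$. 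The stalk of this object at $(q,c)$ is, up to shift, the fibre of the restriction $R\Gamma(\{q\}\x\R;\mathcal{H})\to R\Gamma(\{q\}\x(-\infty,c);\mathcal{H})$, and that this restriction is an isomorphism is the non-characteristic deformation lemma \cite[Prop.~2.7.2]{KS90} applied to the increasing family of opens $(-\infty,t)$: the condition $\zeta\le0$ says the covector $+dz$ never lies in $SS(\mathcal{H})$, so sections extend uniquely in the direction of \emph{increasing} $z$ (note this is the opposite of the direction you named). Two points then remain and are where the actual work of \cite[Prop.~2.1--2.2]{Tamarkin18} (equivalently, of the $\gamma$-topology results in \cite[\S3.5]{KS90}) lives: commuting the non-proper pushforward $R\sigma_*$ with passage to the fibre over $(q,c)$, and verifying the deformation lemma's hypotheses along the non-compact fibre. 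As it stands your argument is an accurate roadmap whose central vanishing statement is asserted rather than proved.
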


Another important fact about the Tamarkin category theory is that the category $\D(Q\x\R)$, originally defined to be the left orthogonal complement of  $D_{\zeta\leq0}(Q\times\R)$, is also the right orthogonal complement of the same category $D_{\zeta\leq0}(Q\times\R)$. The quotient functor from $D(Q\x\R)$ to $ D_{\zeta\leq0}(Q\times\R)^{\perp,r}$ is represented by $\hom^{\star}(\K_{\geq0},-)$, the adjoint functor of the convolution with $\K_{\geq0}$. 

To summarize, the situation of the Tamarkin category $\D(Q\x\R)$ inside $D(Q\x\R)$ is illustrated by the following diagram:
\begin{center}
\begin{equation}\label{D}
\begin{tabular}{c}

\xymatrix
{ 
         \ar @{} [drr] |{\bigcup}  
	& D \ar@/_/[dl]_{(-)\bu\K_{\geq0}}  \ar@/^/[dr]^{\hom^{\star}(\K_{\geq0},-)} &  \\
	{D_{\zeta\leq0}}^{\perp,l}  &  \D \ar@2{-}[l] \ar@2{-}[r] & {D_{\zeta\leq0}}^{\perp,r}
}

\end{tabular}
\end{equation}
\end{center}

Now we introduce the notion of Tamarkin subsest categories:
\begin{defn}[Tamarkin Subset Category]
	Consider the de-homogenization map $\rho:T^*Q\x\dot{T}^*\R\rightarrow T^*Q$, $\rho(q,p,z,\zeta)=(q,\frac{p}{\zeta})$. 
	\begin{enumerate}
		\item For $A \overset{cls}\subset T^*Q$, we define $\D_A(Q\x\R):=\{\F\in \D| SS(\F)\subset \rho^{-1}(A)\}$.
		\item	For $U\overset{open}\subset T^*Q$, we define $\D_U(Q\x\R):={D_{T^*Q\setminus U}}^{\perp,l}$ w.r.t $Rhom$ in $\D$.
	\end{enumerate}
\end{defn}

\subsection{Sheaf quantization}

Let $I\subset\R$ be an open interval (connected subset of $\R$ with non-empty interior) containing the origin and let
$h:I\x T^*Q\rightarrow T^*Q$ be the Hamiltonian diffeomorphism generated by an autonomous Hamiltonian function $H: T^*Q\rightarrow\R$.

It is convenient to construct such sheaf quantization using the actional functional (which is implicitly carried out in Guillermou-Kashiwara-Schapira's \cite[Proposition A.6]{GKS12}). Define the action functional $S:I\x T^*Q\rightarrow\R$ by $S(s_1;q_1,p_1)=\int_\gamma pdq-Hds$ where $\gamma$ is the Hamiltonian trajectory $\gamma(s)=h_s(q_1,p_1)$ connecting $\gamma(0)=(q_1,p_1)$ and $\gamma(s_1)=(q_2,p_2)$. The Hamiltonian diffeomorphism $h$ lifts to a  Hamiltonian diffeomorphism $\tilde{h}:I\x T^*Q\x T^*_{>0}\R\rightarrow T^*Q\x T^*_{>0}\R$ by 
\begin{equation}\label{liftto}
\tilde{h}_s(q_1,p_1;z,\zeta)=(q_2,p_2;z+S(s;q_1,\frac{p_1}{\zeta}),\zeta),
\end{equation}
where $(q_2,p_2)=h_s(q_1,p_1)$. The map $\tilde{h}$ is homogeneous in the sense that $h\circ (id_I\x\rho )=\rho\circ\tilde{h}$ as maps from $I\x T^*Q\x T^*_{>0}\R$ to $T^*Q$. In other words, the following diagram commutes:
\begin{center}
	\begin{equation}
	\begin{tabular}{c}
		\xymatrix{
			{{T_{\zeta>0}^*}(Q\x\R)\x I}\ar[d]_{\rho\x id_I} \ar[rr]^{\tilde{h}}  &   & {{T_{\zeta>0}^*}(Q\x\R)}\ar[d]_{\rho}\\
			{T^*Q \x I}\ar[rr]^{h}&  & {T^*Q}.
		}
	\end{tabular}	
	\end{equation}
\end{center}

Let $\Lambda\subset T^*(I\x Q\x Q)\x\R$ be the Legendrian suspension of the graph of $h$, defined by
$$
\Lambda:=\{(-Hds;q_1,-p_1;q_2,p_2;-S(s;q_1,p_1))|(q_2,p_2)=h_s(q_1,p_1)    \}.
$$

Its homogenization $\tilde{\Lambda}\subset T^*(I\x Q\x Q\x\R)$ is defined by
\begin{equation}
\tilde{\Lambda}=\{(\zeta x;z,\zeta)|(x,z)\in\Lambda,\zeta>0\}, 
\end{equation}
which is just the Lagrangian suspension of the graph of $\tilde{h}$ in the sense of convolution product.

\begin{prop}\label{quantization}
Suppose that the flow of $h$ satisfies the condition ($\star$) and we take $I=\R$. Then there exists an object $\s\in\D(I\x Q\x Q\x\R)$ such that $\s|_{0\x Q\x Q\x\R}$ is isomorphic to $\K_{\Delta}$ in $\D$, where $\Delta=\{(q,q,z)|z\geq0\}$ and $SS(\s)\cap \{\zeta>0\} \subset \tilde{\Lambda}$. The object $\s$ is called the sheaf quantization of the Hamiltonian diffeomorphism $h$.
\end{prop}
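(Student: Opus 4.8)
The plan is to construct $\s$ by a patching argument over a covering of $\R$ by small intervals, using the Short-term Separating condition to produce the local pieces and the Long-term Completeness together with a uniqueness statement to glue them. First I would address the \emph{local} case: pick the interval $J$ from condition ($\star$)(2) and work over a subinterval $I_0 = (-\epsilon,\epsilon) \subset J$. On $I_0 \x Q \x Q$ the separating hypothesis says that for each positive time there is at most one Hamiltonian chord between prescribed fibers, so the front projection of $\tilde\Lambda$ to $I_0 \x Q \x Q \x \R$ is (the closure of a) graph-type Lagrangian that is transverse enough to apply the Guillermou--Kashiwara--Schapira existence theorem \cite{GKS12}: there is an object $\s_0 \in D(I_0 \x Q \x Q \x \R)$ with $SS(\s_0) \cap \{\zeta > 0\} \subset \tilde\Lambda|_{I_0}$ and $\s_0|_{0} \cong \K_\Delta$. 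Applying the quotient functor $(-)\bu\K_{\ge0}$ lands this in $\D$, and because $\tilde\Lambda \subset \{\zeta \ge 0\}$ the microsupport estimate is preserved (here one uses Proposition \ref{positive} and the microsupport bounds of Propositions \ref{push}--\ref{tenhom} applied to the convolution kernel). This settles the claim when $I$ is a small enough interval.

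Next I would \emph{globalize}. Completeness gives a flow for all $s\in\R$, hence $\tilde h_s$ is a well-defined contact (homogeneous Hamiltonian) isotopy on all of $\R$. Cover $\R$ by intervals $I_k = (a_k, b_k)$ each of length $< \epsilon$ with consecutive overlaps, and on each produce a local quantization $\s_k$ by translating the construction above along the flow: concretely, $\s_k$ is obtained from $\s_0$ by convolving with the quantization of the time-$a_k$ map (which itself exists, inductively, once we know the interval containing $a_k$). On the overlap $I_k \cap I_{k+1}$, both $\s_k$ and $\s_{k+1}$ are objects of $\D$ with microsupport in the same piece of $\tilde\Lambda$ and with the same restriction to a common time slice; the uniqueness part of the GKS quantization theorem — an object of $\D_{\tilde\Lambda}$ is determined up to unique isomorphism by its restriction to one time slice — furnishes a canonical isomorphism $\s_k|_{I_k \cap I_{k+1}} \xrightarrow{\ \sim\ } \s_{k+1}|_{I_k \cap I_{k+1}}$. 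These isomorphisms are compatible on triple overlaps (again by uniqueness), so by the gluing property of the (triangulated, but in fact dg/$\infty$-categorical) sheaf category they assemble to a global $\s \in \D(\R \x Q \x Q \x \R)$ restricting to each $\s_k$. The microsupport condition $SS(\s) \cap \{\zeta>0\} \subset \tilde\Lambda$ and the initial condition $\s|_{0} \cong \K_\Delta$ are local, hence inherited.

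The step I expect to be the \textbf{main obstacle} is the gluing: a priori the derived category $D(Q\x\R)$, being merely triangulated, does not satisfy descent, so the cocycle of isomorphisms $\{\s_k|_{I_k\cap I_{k+1}} \cong \s_{k+1}|_{I_k\cap I_{k+1}}\}$ need not patch inside it. I would handle this exactly as in \cite{GKS12, Chiu17}: pass to a genuine sheaf-theoretic model (work with sheaves rather than the derived category and take an honest gluing, or invoke the enhancement used there), or — more in the spirit of the present note — avoid an infinite cover entirely by exploiting the group structure. Since $\tilde h_{s+t} = \tilde h_s \circ \tilde h_t$, a quantization over a fixed small $I_0$ together with the convolution operation lets one define $\s$ over $\R$ by the formula $\s|_{(k\epsilon/2 + I_0)} = (\text{quantization of } \tilde h_{k\epsilon/2}) \bu \s_0$ and check directly that these agree on overlaps by the semigroup identity and the uniqueness statement; the verification that the resulting family is an object of $\D$ over all of $\R$ (rather than a compatible family of local objects) is then the one genuinely delicate point, and it is precisely where the completeness hypothesis is used in an essential way — without it the "quantization of $\tilde h_{k\epsilon/2}$" ceases to exist for large $k$. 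Finally I would record the routine checks: that $(-)\bu\K_{\ge0}$ does not enlarge $SS$ outside $\{\zeta\le 0\}$, that $\Delta$ (with its $z\ge 0$ tail) is the correct $s=0$ slice coming from $S(0;q,p) = 0$, and that $\tilde\Lambda$ is closed under the flow so the global microsupport bound is consistent.
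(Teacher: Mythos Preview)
Your overall architecture---build a local quantization on the separating interval $J$, then propagate to all of $\R$ via the group law $\tilde h_{s+t}=\tilde h_s\circ\tilde h_t$ and convolution---is exactly the paper's. The gluing you arrive at in your second pass (define the quantization on translates of $J$ by convolving $\s_J$ with the quantization of a fixed time-shift, and check compatibility on overlaps by uniqueness) is precisely what the paper means by ``adapt the strategy from \cite[Lemma 3.5(B)]{GKS12}.''

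The one substantive difference is in the local step. You invoke the abstract GKS existence theorem to produce $\s_0$ on $I_0$; the paper instead writes the local object down by hand as
\[
\s_J=\K_{\{(s,q_1,q_2,z)\mid z+S(s;q_1,q_2)\ge 0\}},
\]
using that the Short-term Separating condition lets one express the action as a function of $(s,q_1,q_2)$. This is not merely cosmetic: as the paper explains immediately after the proof, the GKS existence theorem is formulated for homogeneous Hamiltonians on $\dot T^*M$, and homogenizing $h$ to that setting via (\ref{liftto}) requires $h$ to have compact support so that $\tilde h$ extends across $\zeta=0$. Condition~($\star$) does not give you compact support, so your appeal to GKS as a black box is not quite licensed. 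The explicit epigraph formula bypasses this---one checks the microsupport condition and the $s=0$ restriction directly (the latter only in $\D$, not in $D$, as the paper's kinetic-energy example illustrates). If you replace your abstract local step with this explicit one, your proof becomes the paper's proof.
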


\begin{proof}
Recall from Definition \ref{star} there is a connected open interval $J$ such that for any given $0<s\in J$ and $q_1,q_2\in Q$ there exists at most one Hamiltonian trajectory $\gamma$ from $T^*_{q_1}Q$ to $T^*_{q_2}Q$ with time $s$. For such $s\in J$ and such $q_1,q_2$ we can write $p_1=p_1(q_1,q_2)$ and rewrite the action functional $S(s;q_1,p_1)$ by $S(s;q_1,q_2)$. The local sheaf quantization is defined by
\begin{equation}
\s_J := \K_{\{(s,q_1,q_2,z) | z+S(s;q_1,q_2)\geq0 \} } \in \D(J\x Q\x Q\x\R).
\end{equation}
We then adapt the strategy from \cite[Lemma 3.5(B)]{GKS12} to glue to the global sheaf $\s\in\D(I\x Q\x Q\x\R)$.

\end{proof}

Let us point out that our notion of sheaf quantization is slightly different from that originally established in Guillermou-Kashiwara-Schapira's \cite{GKS12}. First, in \cite{GKS12} the sheaf quantization is defined for homogeneous Hamiltonian on $\dot{T}^*M$ with its zero section cut off. In the scenario of homogenizing $h$ on $T^*Q$ to $\tilde{h}$ on $\dot{T}^*M$ where $M=Q\x\R$, it is required by \cite{GKS12} that the pre-homogenized Hamiltonian $h$ satisfies compact support condition for (\ref{liftto}) to be defined at $\zeta=0$. Second, in \cite{GKS12} it is shown that the sheaf quantization $\s$ satisfies $\s|_{s=0}\cong \K_\Delta$ in the ambient category $D(I\x M\x M)=D(I\x Q\x Q \x\R\x\R)$. In constrast to \cite{GKS12} we only require the sheaf quantization at time $s=0$ to be isomorphic to $\K_\Delta$ within the Tamarkin category $\D(I\x Q\x Q\x\R)$, and instead of the composition product we use the convolution product which is more compatible with $\D$. This slightly weaker isomorphism allows us to treat certain $h$ not necessarily extendable to $\dot{T}^*(Q\x\R)$ and drop the compact support condition.

For example let us take $H=\f{1}{2}p^2$ then one has $h_s(q_1,p_1)=(q_1+sp_1,p_1)$ and $S(s;q_1,p_1)=\f{-1}{2}s{p_1}^2 $. Therefore $S(s;q_1,q_2)=\f{-1}{2s}(q_2-q_1)^2$ for $s\neq0$. Let $\s := \K_{\{ (s,q_1,q_2,z)| z+ S(s;q_1,q_2)\geq0  \}}$ be the associated sheaf quantization. We see that when $q_1\neq q_2$, let $s\rightarrow 0^+$ then $z$ has no support at all. On the other hand let $s\rightarrow 0^-$ then we see that $z$ is supported on the whole $\R$. These lead to $\s|_{s=0}\cong \K_\Delta$ in the Tamarkin category $\D$ rather than in the ambient category $D$.

\section{Microlocal Projector}

\subsection{Projectors associated to sublevel sets} This subsection is devoted to the construction of the projection kernel associated with the open sublevel set. We assume
$$
H : T^*Q\rightarrow\R
$$
is an autonomous Hamiltonian function satisfying the condition ($\star$) (See Definition \ref{star}).

For this Hamiltonian $H$ we take the object $\s\in \D(I\x Q\x Q\x\R)$ obtained in Proposition \ref{quantization}. By the completeness of $H$ we choose the time interval $I$ to be the whole $\R$. We stick to the notation $I$ in order to distinguish the time (in $s$) with the extra action variable $\R$ (in $z$). We denote the dual of $I$ by $I^*$ (in $t$).

Let us perform the Fourier-Sato-Tamarkin transform on $\s$ with respect to the time variable $I\rightsquigarrow I^*$ :
\begin{equation}
\widehat{\s}:= \s\underset{s}{\bu}\K_{\{(s,t,z)|z+st\geq0\}}[1]\in \D(I^*\x Q\x Q\x\R)
\end{equation}

In the realm of functional analysis, the Fourier transform is expected to intertwine uniform distributions and delta functions. Similarly in algebraic microlocal analysis we have the following property for the Fourier-Sato-Tamarkin transform:
\begin{equation}
\widehat{\s}\underset{t}{\circ}{\K_{\R}}=\s\underset{s}{\bu}\K_{\{(s,t,z)|z+st\geq0\}}[1]\underset{t}{\circ}{\K_{\R}}\cong \s\underset{s}{\bu}\K_{\{s=0,z\geq0\}}=\s|_{s=0} 
\end{equation}
which is, by Theorem \ref{quantization},
\begin{equation}
\widehat{\s}\underset{t}{\circ}{\K_{\R}}\cong \K_{\Delta}.
\end{equation}

Note that in the above formulation we write the composition product  "$\underset{t}{\circ}$" instead of the convolution product "$\underset{t}{\bu}$" under the convention that $\K_{\R}=\K_{\R_t}$ lacks the $z$-component. It is easy to verify that the composition $(-)\underset{t}{\circ}{\K_{\R}}$ is equivalent to the convolution $(-)\underset{t}{\bu}{\K_{\{(t,z)|t\in\R,z\geq0\}}}$ in $\D$.

Let $C$ be a real number and let $B_C=\{(q,p)\in T^*Q | H(q,p)<C \}$ be the open sublevel set of $H$ of level $C$.

\begin{defn}[Microlocal Projectors]
We define the microlocal projectors associated to $B_C$ and $T^*Q\setminus B_C$ respectively by
\begin{equation}
\PC:=\widehat{\s}\underset{t}{\circ}\K_{\{t<C\}}
\end{equation} 
and 
\begin{equation}
\QC:=\widehat{\s}\underset{t}{\circ}\K_{\{t\geq C\}}.
\end{equation}
respectively.
\end{defn}

Together with $\K_{\Delta}$ they form the following exact triangle
\begin{equation}
\PC\rightarrow\K_{\Delta}\rightarrow \QC \xrightarrow{+1}
\end{equation}
which can be obtained by composing $\widehat{\s}$ with the fundamental triangle 
\begin{equation}
\K_{\{t<C\}} \rightarrow\K_{\R}\rightarrow \K_{\{t\geq C\}} \xrightarrow{+1}.
\end{equation}

Since $\PC$, $\QC$, and $\K_{\Delta}$ are objects of $\D(Q\x Q\x\R)$, convolutions with them give rise to functors from $\D(Q\x\R)$ to $\D(Q\x\R)$. Moreover we have the following theorem:

\begin{thm}\label{projector}
The convolution with the exact triangle
\begin{equation}
\PC\rightarrow\K_{\Delta}\rightarrow \QC \xrightarrow{+1}
\end{equation}
gives rise to the semiorthogonal decomposition with respect to the triple of subset categories
\begin{equation}
(\D_{B_C}(Q\x\R),\D(Q\x\R),\D_{T^*Q\setminus B_C}(Q\x\R)).
\end{equation}
In particular, we have idempotent functors $\PC\bu\PC\cong\PC$ and $\QC\bu\QC\cong\QC$ (this is where the term projector comes from).

\end{thm}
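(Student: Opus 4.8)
The plan is to establish the semiorthogonal decomposition by verifying the three defining conditions of such a decomposition for the triple $(\D_{B_C}, \D, \D_{T^*Q\setminus B_C})$, using the exact triangle $\PC\to\K_\Delta\to\QC\xrightarrow{+1}$ as the gluing datum, and then deduce idempotency as a formal consequence. First I would pin down the microsupport estimates: I would show that convolution with $\PC = \widehat{\s}\circ_t\K_{\{t<C\}}$ lands in $\D_{B_C}(Q\x\R)$ and convolution with $\QC = \widehat{\s}\circ_t\K_{\{t\ge C\}}$ lands in $\D_{T^*Q\setminus B_C}(Q\x\R)$. This is the geometric heart of the argument. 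Using Proposition \ref{quantization} we know $SS(\s)\cap\{\zeta>0\}\subset\tilde\Lambda$, the homogenized graph of the flow; after the Fourier–Sato–Tamarkin transform in the time variable the microsupport of $\widehat{\s}$ is controlled by the Legendre transform of $\tilde\Lambda$ in the $(s,t)$ directions, which converts the ``$-Hds$'' component into the constraint $t = H(q,p/\zeta)$ on the base-point covector. Composing with $\K_{\{t<C\}}$ (resp.\ $\K_{\{t\ge C\}}$) over the $t$-variable and applying the microsupport bounds for $\circ_t$ — i.e.\ Proposition \ref{push} for the proper pushforward together with the non-characteristic/projection estimates of Proposition \ref{projection} — cuts the microsupport down to the locus where $H(q,p/\zeta)<C$ (resp.\ $\ge C$), which after de-homogenization via $\rho$ is exactly $\rho^{-1}(B_C)$ (resp.\ $\rho^{-1}(T^*Q\setminus B_C)$). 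One must also check that these convolutions preserve $\D$ itself, which follows from the fact that $\widehat{\s}\in\D(I^*\x Q\x Q\x\R)$ and the Tamarkin category is closed under the relevant convolutions with kernels in $\D$.

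Next I would verify the orthogonality $\mathrm{Hom}_{\D}(\PC\bu\F,\QC\bu\GG)=0$ for all $\F,\GG\in\D(Q\x\R)$, equivalently that $\mathrm{Hom}(\mathcal{A},\mathcal{B})=0$ whenever $\mathcal{A}\in\D_{B_C}$ and $\mathcal{B}\in\D_{T^*Q\setminus B_C}$. The cleanest route is to recognize $\D_{B_C}(Q\x\R)$ as the left orthogonal ${D_{T^*Q\setminus B_C}}^{\perp,l}$ by definition of the Tamarkin subset category, so the vanishing is essentially built in once I know $\QC\bu\GG$ has microsupport in $\rho^{-1}(T^*Q\setminus B_C)$; the microsupport estimate from the previous paragraph then gives the orthogonality directly. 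I would also record the generation/spanning condition: the triangle $\PC\to\K_\Delta\to\QC$ convolved with any $\F$ gives $\PC\bu\F\to\F\to\QC\bu\F\xrightarrow{+1}$ (using $\K_\Delta\bu\F\cong\F$ up to the standard identification $\K_\Delta\bu(-)=\mathrm{id}$ in $\D$, which itself needs a brief check since $\Delta=\{(q,q,z)\,|\,z\ge0\}$ and the convolution uses $\K_{\ge0}$), exhibiting every object of $\D$ as an extension of an object of $\D_{T^*Q\setminus B_C}$ by an object of $\D_{B_C}$. Together with the orthogonality, this is precisely a semiorthogonal decomposition; the standard argument (as in Proposition \ref{positive}) then shows the two functors $\PC\bu(-)$ and $\QC\bu(-)$ are the corresponding projection functors.

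Finally, idempotency follows formally: in any semiorthogonal decomposition the projection onto a component is idempotent, so $\PC\bu\PC\bu\F\cong\PC\bu\F$ and $\QC\bu\QC\bu\F\cong\QC\bu\F$ naturally in $\F$; taking $\F$ to run over the category and using that $\K_\Delta$ is the unit, this upgrades to $\PC\bu\PC\cong\PC$ and $\QC\bu\QC\cong\QC$ as objects of $\D(Q\x Q\x\R)$. Alternatively one can see it concretely: $\PC\bu\PC = \widehat{\s}\circ_t\K_{\{t<C\}}\bu\widehat{\s}\circ_t\K_{\{t<C\}}$, and the composition $\widehat{\s}\bu\widehat{\s}$ corresponds, via the Fourier–Sato–Tamarkin dictionary, to $\s\bu\s$ which is again $\s$ because $\s$ quantizes a flow (composition of the flow with itself using additivity of the action functional $S(s_1;q_1,p_1)+S(s_2;\cdot)=S(s_1+s_2;q_1,p_1)$ along trajectories); then $\K_{\{t<C\}}\circ_t\K_{\{t<C\}}$ over the real line reproduces $\K_{\{t<C\}}$, giving idempotency. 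I expect the main obstacle to be the first step: tracking the microsupport through the Fourier–Sato–Tamarkin transform and the composition with $\K_{\{t<C\}}$ while only having the one-sided inclusion $SS(\s)\cap\{\zeta>0\}\subset\tilde\Lambda$ rather than equality, and in particular ensuring that the non-characteristic hypotheses needed to apply Propositions \ref{pull} and \ref{projection} hold uniformly — here the ``short-term separating'' clause of condition ($\star$), which makes the local sheaf quantization $\s_J$ a genuine (shifted) constant sheaf on a hypersurface defined by a single-valued action, is exactly what rules out caustic-type degeneracies that would otherwise spoil the microsupport bound and hence the projector property. The completeness clause is what allows $I=\R$ and thus makes $B_C$, rather than some smaller time-truncated set, the image of the construction.
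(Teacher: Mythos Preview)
Your treatment of $\QC$ matches the paper's Lemma \ref{right}: a direct microsupport estimate shows $\GG\bu\QC\in\D_{T^*Q\setminus B_C}$. The gap is in your treatment of $\PC$. You propose to show directly that $SS(\PC\bu\GG)\subset\rho^{-1}(B_C)$, but a microsupport estimate for the composition $\widehat{\s}\circ_t\K_{\{t<C\}}$ cannot yield a strict inequality: the kernel $\K_{\{t<C\}}$ has microsupport containing the conormal to $\{t=C\}$, so after tensoring and pushing forward along $t$ you pick up boundary contributions with $H(q_2,p_2')=C$. At best you obtain $SS(\PC\bu\GG)\subset\rho^{-1}(\{H\le C\})$, and this closed condition does \emph{not} give membership in $\D_{B_C}$, which by definition is the left orthogonal ${\D_{T^*Q\setminus B_C}}^{\perp,l}$ rather than a microsupport condition. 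Nor does it suffice to check $Rhom(\PC\bu\F,\QC\bu\GG)=0$: you must show $Rhom(\PC\bu\F,\mathcal{B})=0$ for \emph{every} $\mathcal{B}\in\D_{T^*Q\setminus B_C}$, not just those in the essential image of $\QC\bu(-)$.

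The paper closes exactly this gap by passing to the right adjoint: it constructs an explicit functor $\EC$ adjoint to $(-)\bu\PC$ and proves $\EC(\F)\cong 0$ for all $\F\in\D_{T^*Q\setminus B_C}$ (Lemma \ref{vanishing}). The vanishing argument runs a microsupport estimate on $R{\pi_{IQ1R1}}_*j^!\underline{Rhom}({\pi_{IQ12R2}}^{-1}\widehat{\s},{\pi_\sigma}^!\F)$ and derives a contradiction $t\ge C$ versus $t<C$. The crucial step is showing that a certain sequence $\{p_2'\}$ has a finite limit, and this is where completeness is genuinely used: one needs $(q_2,\lim p_2')=h_{-\tau}(q_1,-p_1)$ to exist for arbitrary $\tau\in\R$. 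Your diagnosis that completeness merely ``allows $I=\R$'' misses this; as the paper's Remark and the Cherry non-example make explicit, Lemma \ref{right} needs no completeness at all, while Lemma \ref{left} fails without it. Likewise, short-term separating is used only to build $\s$ in Proposition \ref{quantization}; it plays no role in the projector estimates once $SS(\s)\cap\{\zeta>0\}\subset\tilde\Lambda$ is in hand.
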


\subsection{Proof of Theorem \ref{projector}}

Before proving the Theorem, let us briefly explain how to deduce the idempotency of the projector $\PC$ (similarly for $\QC$) from the desired  semiorthogonal decomposition. For any $\F\in\D(Q\x\R)$ we have its semidecomposition represented by the following exact triangle
\begin{equation}
\PC\bu\F\rightarrow\F\rightarrow \QC\bu\F \xrightarrow{+1}.
\end{equation}
We then perform the operation $\PC\bu(-)$ and get the following exact triangle
\begin{equation}
\PC\bu\PC\bu\F\rightarrow\PC\bu\F\rightarrow \PC\bu\QC\bu\F \xrightarrow{+1}.
\end{equation}
On the other hand, by the semiorthogonal decomposition of the object $\PC\bu\F$ there is an exact triangle
\begin{equation}
\PC\bu\PC\bu\F\rightarrow\PC\bu\F\rightarrow \QC\bu\PC\bu\F \xrightarrow{+1}.
\end{equation}
It is easy to see that the first morphisms in both triangles are identical. By the uniqueness of the cone one has the equality $\PC\bu\QC\bu\F\cong\QC\bu\PC\bu\F$. This object belongs to both the category $\D_{T^*Q\setminus B_C}(Q\x\R)$ and its left orthogonal complement $\D_{T^*Q\setminus B_C}(Q\x\R)^{\perp,l}$, hence $\PC\bu\QC\bu\F\cong\QC\bu\PC\bu\F\cong 0$ is isomorphic to the zero object of $\D(Q\x\R)$. So we have the isomorphism $\PC\bu\PC\bu\F\cong\PC\bu\F$ functorially for any $\F\in\D(Q\x\R)$ and we deduce the idempotency of $\PC$.

Now we turn to the main body of the proof. First we verify that the convolution with $\QC$ projects to the subcategory $\D_{T^*Q\setminus B_C}(Q\x\R)$. 
\begin{lem}\label{right}
	For any object $\GG\in\D(Q\x\R)$, one has $\GG\underset{Q}{\bu}\QC\in \D_{T^*Q\setminus B_C}(Q\x\R)$.
\end{lem}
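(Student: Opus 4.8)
The plan is to show that the microsupport of $\GG\underset{Q}{\bu}\QC$, after de-homogenization, lands in $\rho^{-1}(T^*Q\setminus B_C)$, i.e. avoids the open set $B_C$ entirely. Since $\QC=\widehat{\s}\underset{t}{\circ}\K_{\{t\geq C\}}$, I would first unwind the convolution with $\GG$ as a sequence of pullbacks, a tensor, and a proper pushforward along the $Q_2$-direction and the $t$-direction, and then apply the microsupport estimates of Propositions \ref{push}, \ref{pull}, \ref{tenhom}, and most crucially Proposition \ref{projection} (in its manifold form stated right after it, governing the integration over the $t$-variable). The key geometric input is Proposition \ref{quantization}: $SS(\s)\cap\{\zeta>0\}\subset\tilde\Lambda$, so that $SS(\widehat\s)\cap\{\zeta>0\}$ is controlled by the Fourier-transformed Lagrangian, which records the pairs $(q_1,p_1;q_2,p_2)$ with $(q_2,p_2)=h_s(q_1,p_1)$ and now carries $t=H(q_1,p_1)$ as the variable dual to $s$ (this is the standard fact that the time-Fourier transform of the flow quantization sees the energy as the dual variable, because $S(s;q_1,p_1)=\int p\,dq-Hds$ and $\partial_s S=-H$).

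The heart of the argument is then the following: after composing $\widehat\s$ with $\K_{\{t\geq C\}}$ over the $t$-line, integrating out $t$, one is left — at the homogenized level, on $\{\zeta>0\}$ — with the union over $s\in\R$ and over $t\geq C$ of the graphs of $h_s$ restricted to the energy hypersurface $\{H=t\}$, together with the contribution of the boundary $\{t=C\}$. Because $\{t\geq C\}$ translates to $\{H(q_1,p_1)\geq C\}$, every point $(q_1,p_1;q_2,p_2)$ appearing in $SS(\QC)\cap\{\zeta>0\}$ (after dehomogenization via $\rho$) has $(q_1,p_1)\notin B_C$; and since $h_s$ preserves $H$, also $(q_2,p_2)\notin B_C$. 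Convolving with an arbitrary $\GG\in\D(Q\x\R)$ then only pushes the second leg $(q_2,p_2)$ forward along these graphs, so the $Q_2$-microsupport of $\GG\underset{Q}{\bu}\QC$ stays in $\{H\geq C\}=T^*Q\setminus B_C$; the non-characteristic/properness hypotheses needed to invoke Propositions \ref{push}–\ref{pull} hold because the $\K_{\{t\geq C\}}$-truncation and the $\{\zeta>0\}$-localization make the relevant projections proper on supports (this is exactly the role of completeness of the flow — it guarantees $\s$, hence $\widehat\s$, is globally defined over all $s\in I=\R$). One should also check that the $\{\zeta\leq 0\}$ part contributes nothing in $\D$, which is automatic since objects of $\D$ have $SS\subset\{\zeta\geq 0\}$ and the $\zeta=0$ locus is killed by the dehomogenization $\rho^{-1}$ being taken only over $\dot T^*\R$.

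The main obstacle I expect is making the microsupport bookkeeping for the $t$-integration genuinely rigorous: Proposition \ref{projection} bounds $SS(R\pi_!\F)$ by $i^{-1}\overline{\kappa(SS(\F))}$, and one must verify that taking this closure does not introduce stray covectors over $B_C$ — in particular one must control what happens as $s\to\pm\infty$ and as $t\to\infty$, i.e. that the closure of the family of graphs $\{(h_s\text{-graph})\cap\{H=t\}: s\in\R, t\geq C\}$ does not ``leak'' limiting covectors into the region $\{H<C\}$. Here completeness and the closedness of $\{H\geq C\}$ are exactly what is needed: $H$ being continuous, $\{H\ge C\}$ is closed and $h_s$-invariant for all $s$, so the closure operation stays inside it. A secondary technical point is the treatment of the boundary term from the triangle $\K_{\{t<C\}}\to\K_\R\to\K_{\{t\geq C\}}$, which contributes covectors with $t=C$ exactly, i.e. on the hypersurface $\{H=C\}\subset T^*Q\setminus B_C$ — harmless. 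I would organize the proof by first doing the case $\GG=\K_\Delta$ (so the statement reduces to $SS(\QC)$ after dehomogenization lying in $\rho^{-1}(T^*Q\setminus B_C)$), and then bootstrapping to general $\GG$ by the composition estimates, since convolution with $\QC$ on the right cannot enlarge the $Q_2$-leg beyond where $SS(\QC)$ already lives.
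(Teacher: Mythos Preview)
Your plan is correct and follows the same route as the paper: unwind $\GG\underset{Q}{\bu}\QC$ as pullbacks, tensor with ${\pi_{IQ12R2}}^{-1}\widehat{\s}$ and with the $\{t\geq C\}$-cutoff, then push forward; estimate the microsupports of the pieces via Propositions \ref{pull} and \ref{quantization}, combine with Proposition \ref{tenhom}, and finish with Propositions \ref{push} and \ref{projection} to conclude that any covector $(q_2,p;z,1)$ surviving in the microsupport has $H(q_2,p)\geq C$. The paper carries this out in one pass for general $\GG$, whereas you propose first treating $\GG=\K_\Delta$ (i.e.\ bounding $SS(\QC)$ itself) and then bootstrapping; either organization works, and the underlying calculation is the same.

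One correction worth flagging: you twice attribute the control of the closure in Proposition \ref{projection} to completeness of the flow, but in fact this lemma does \emph{not} use completeness at all (the paper remarks on this explicitly just after the proof of Lemma \ref{left}). The limiting argument here only needs continuity of $H$: from $\{\zeta_2\}\to 1$ and $\{\zeta_2 p_2'\}\to p$ one gets $\{p_2'\}\to p$ directly, hence $H(q_2,p)=\lim H(q_2,p_2')\geq C$. No appeal to $h_s$ being defined for all $s$ is made. Completeness is essential only in Lemma \ref{left}, where one must know that $h_{-\tau}(q_1,-p_1)$ exists for an arbitrary limit value $\tau\in\R$; that is where the ``non-example'' in the last section breaks down. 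Your remark that completeness is needed for $\s$ (hence $\widehat{\s}$) to be globally defined over $I=\R$ is of course true, but that is a standing hypothesis of the whole setup rather than an ingredient of this particular microsupport estimate.
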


\begin{proof}[Proof of Lemma \ref{right}]
The proof is word for word valid for the \textit{Step 1} of the proof of \cite[Theorem 3.11]{Chiu17}. For the reader's convenience we reproduce the argument \textit{loc. cit.} as follows:

Denote by $Q_1=Q_2=Q$ the base manifold and denote by $\R_1=\R_2=\R$ the extra action variable. We take $\GG\in\D(Q_1\x\R_1)$ and $\widehat{\s}\in\D(I^*\x Q_1\x Q_2\x\R_2)$. Let $\pi_{Q1R1}:I^*\x Q_1\x Q_2\x\R_1\x\R_2\rightarrow Q_1\x\R_1$ and $\pi_{IQ12R2}:I^*\x Q_1\x Q_2\x\R_1\x\R_2\rightarrow Q_1\x Q_2\x\R_2$ be projection maps. Let $\pi_\sigma:I^*\x Q_1\x Q_2\x\R_1\x\R_2\rightarrow Q_2\x\R$ be the product of maps that take sum $\R_1\x\R_2\rightarrow\R$ and project $I^*\x Q_1\x Q_2\x\R_1\x\R_2$ onto $Q_2$.

By the definition of microlocal projector we have
\begin{equation}\label{adam}
\GG\underset{Q}{\bu}\QC=\GG\underset{Q}{\bu}\widehat{\s}\underset{t}{\circ}\K_{\{t\geq C\}}
= R{\pi_\sigma}_!({\pi_{Q1R1}}^{-1}\GG\otimes {\pi_{IQ12R2}}^{-1}\widehat{\s} \otimes \K_{[R,\infty)\x Q_1\x Q_2\x\R_1\x\R_2})
\end{equation}

In the cotangent bundle $T^*(I^*\x Q_1\x Q_2\x\R_1\x\R_2)$ one obtains the following estimates using Proposition \ref{pull} and Proposition \ref{quantization}:
\begin{equation}\label{akira}
\begin{split}
SS({\pi_{Q1R1}}^{-1}\GG)\subset \{(t,0;q_1,\zeta_1 p_1;q_2,0;z_1,\zeta_1;z_2,0)|\zeta_1\geq0 \}\\
SS( {\pi_{IQ12R2}}^{-1}\widehat{\s})\subset \{(t,\zeta_2 s;q_1,\zeta_2 p_1';q_2,\zeta_2 p_2';z_1,0;z_2,\zeta_2)|\zeta_2\geq0,\\
 \quad\quad H(q_1,p_1')=H(q_2,p_2')=t, (q_2,p_2')=h_s(q_1,p_1') \}.
 \end{split}
\end{equation}
From (\ref{akira}) it is clear that 
$$
SS({\pi_{Q1R1}}^{-1}\GG)\cap SS( {\pi_{IQ12R2}}^{-1}\widehat{\s})^a\subset T^*_{I^*\x Q_1\x Q_2\x\R_1\x\R_2}(I^*\x Q_1\x Q_2\x\R_1\x\R_2)
$$
Therefore by Proposition \ref{tenhom} we have
\begin{equation}\label{antonio}
\begin{split}
SS({\pi_{Q1R1}}^{-1}\GG\otimes {\pi_{IQ12R2}}^{-1}\widehat{\s})\subset SS({\pi_{Q1R1}}^{-1}\GG)+SS({\pi_{IQ12R2}}^{-1}\widehat{\s})\\
\subset\{(t,\zeta_2 s;q_1,\zeta_1 p_1 + \zeta_2 p_1';q_2,\zeta_2 p_2';z_1,\zeta_1;z_2,\zeta_2)    | \zeta_1\geq 0,\zeta_2\geq 0, H(q_1,p_1')=H(q_2,p_2')=t \}
\end{split}
\end{equation}

On the other hand, we know that
\begin{equation}\label{alfonso}
\begin{split}
SS(\K_{[R,\infty)\x Q_1\x Q_2\x\R_1\x\R_2}) &
=\{ (C,\zeta)|\zeta\geq0 \}\x T^*_{Q_1\x Q_2\x\R_1\x\R_2}(Q_1\x Q_2\x\R_1\x\R_2)\\
&\cup\{(t,0)|t>C \}\x T^*_{Q_1\x Q_2\x\R_1\x\R_2}(Q_1\x Q_2\x\R_1\x\R_2).
\end{split}
\end{equation}

Thus from (\ref{antonio}) and (\ref{alfonso}) we have

\begin{equation}
\begin{split}
SS({\pi_{Q1R1}}^{-1}\GG\otimes {\pi_{IQ12R2}}^{-1}\widehat{\s})\cap SS(\K_{[R,\infty)\x Q_1\x Q_2\x\R_1\x\R_2})^a\\
 \subset  T^*_{I^*\x Q_1\x Q_2\x\R_1\x\R_2}(I^*\x Q_1\x Q_2\x\R_1\x\R_2)
 \end{split}
\end{equation}

Apply Proposition \ref{tenhom} again we obtain
\begin{equation}\label{alex}
\begin{split}
SS({\pi_{Q1R1}}^{-1}\GG \otimes {\pi_{IQ12R2}}^{-1}\widehat{\s} \otimes \K_{[R,\infty)\x Q_1\x Q_2\x\R_1\x\R_2})\\
\subset SS({\pi_{Q1R1}}^{-1}\GG\otimes {\pi_{IQ12R2}}^{-1}\widehat{\s})+SS(\K_{[R,\infty)\x Q_1\x Q_2\x\R_1\x\R_2})\\
\subset\{(C,\zeta_2 s+\zeta;q_1,\zeta_1 p_1 + \zeta_2 p_1';q_2,\zeta_2 p_2';z_1,\zeta_1;z_2,\zeta_2)\\    
| \zeta_1\geq 0,\zeta_2\geq 0,\zeta\geq0, H(q_1,p_1')=H(q_2,p_2')=C \} \\
\bigcup \{(t,\zeta_2 s;q_1,\zeta_1 p_1 + \zeta_2 p_1';q_2,\zeta_2 p_2';z_1,\zeta_1;z_2,\zeta_2)\\
    | \zeta_1\geq 0,\zeta_2\geq 0, H(q_1,p_1')=H(q_2,p_2')=t>C \}.
 \end{split}
\end{equation}

We factor the map $\pi_\sigma:I^*\x Q_1\x Q_2\x\R_1\x\R_2\rightarrow Q_2\x\R$ into the composition of $g:(t,q_1,q_2,z_1,z_2)\mapsto(t,q_1,q_2,z_1,z_1+z_2)$ and $\pi_{Q2R}:(t,q_1,q_2,z_1,z)\mapsto (q_2,z)$. Then by applying Proposition \ref{push} on (\ref{alex}) we get 

\begin{equation}\label{albert}
\begin{split}
SS(Rg_!({\pi_{Q1R1}}^{-1}\GG \otimes {\pi_{IQ12R2}}^{-1}\widehat{\s} \otimes \K_{[R,\infty)\x Q_1\x Q_2\x\R_1\x\R_2}))\\
\subset\{(C,\zeta_2 s+\zeta;q_1,\zeta_1 p_1 + \zeta_2 p_1';q_2,\zeta_2 p_2';z_1,\zeta_1-\zeta_2;z_2,\zeta_2)\\    
| \zeta_1\geq 0,\zeta_2\geq 0,\zeta\geq0, H(q_1,p_1')=H(q_2,p_2')=C \} \\
\bigcup \{(t,\zeta_2 s;q_1,\zeta_1 p_1 + \zeta_2 p_1';q_2,\zeta_2 p_2';z_1,\zeta_1-\zeta_2;z_2,\zeta_2)\\
| \zeta_1\geq 0,\zeta_2\geq 0, H(q_1,p_1')=H(q_2,p_2')=t>C \}.
\end{split}
\end{equation}

Recall from (\ref{adam}) we have 
\begin{equation}
\GG\underset{Q}{\bu}\QC= R{\pi_{Q2R}}_!Rg_!({\pi_{Q1R1}}^{-1}\GG\otimes {\pi_{IQ12R2}}^{-1}\widehat{\s} \otimes \K_{[R,\infty)\x Q_1\x Q_2\x\R_1\x\R_2}).
\end{equation}

Suppose $(q_2,p,z_2,1)\in SS(\GG\underset{Q}{\bu}\QC)$ is a vector with nonzero component in $\zeta$. By applying Proposition \ref{projection} to the map $\pi_{Q2R}:(t,q_1,q_2,z_1,z)\mapsto (q_2,z)$ and the estimate ($\ref{albert}$) we deduce that there are sequences $\{\zeta_2\}$ and $\{p_2'\}$ subject to the constraint of (\ref{albert}) that satisfy $\{\zeta_2\}\rightarrow 1$ and $\{\zeta_2 p_2'\}\rightarrow p$. Therefore $\{p_2'\}\rightarrow p$ and we obtain $H(q_2,p)= \lim H(q_2,p_2')\geq C$.

\end{proof}

Next, we need to prove that convolution with $\PC$ projects to the left semiorthogonal complement of $\D_{T^*Q\setminus B}(Q\x\R)$.

\begin{lem}\label{left}
	For any object $\F$ of $\D_{T^*Q\setminus B_C}(Q\x\R)$ and object $\GG$ of $\D(Q\x\R)$, we have $Rhom(\GG \underset{Q}{\bu}\PC,\F)\cong 0$.
\end{lem}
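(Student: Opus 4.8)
The plan is to exploit the semiorthogonal decomposition machinery already in place. By Proposition \ref{positive} (and the compatibility of $\D$ with the subset categories as in \cite{Tamarkin18}), proving $Rhom(\GG\underset{Q}{\bu}\PC,\F)\cong 0$ for all $\GG\in\D(Q\x\R)$ and all $\F\in\D_{T^*Q\setminus B_C}(Q\x\R)$ amounts to showing that $\GG\underset{Q}{\bu}\PC$ lies in $\D_{B_C}(Q\x\R)$, i.e. that its microsupport, after de-homogenization via $\rho$, is contained in $\overline{B_C}=\{H\leq C\}$'s complement — more precisely contained in $\rho^{-1}(B_C)$ up to the usual closure subtleties. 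Concretely, I would first reduce to a microsupport estimate: it suffices to show that every covector $(q,p,z,1)\in SS(\GG\underset{Q}{\bu}\PC)$ with $\zeta=1$ has $H(q,p)<C$, and then to invoke the orthogonality characterization of $\D_{B_C}$ — the left orthogonal of $D_{T^*Q\setminus B_C}$ — exactly as $\QC$ was handled in Lemma \ref{right}.

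The core computation is the mirror image of the one in Lemma \ref{right}, now with $\K_{\{t<C\}}$ in place of $\K_{\{t\geq C\}}$. I would write
\begin{equation}
\GG\underset{Q}{\bu}\PC = R{\pi_\sigma}_!\bigl({\pi_{Q1R1}}^{-1}\GG\otimes{\pi_{IQ12R2}}^{-1}\widehat{\s}\otimes\K_{\{t<C\}\x Q_1\x Q_2\x\R_1\x\R_2}\bigr),
\end{equation}
reuse the microsupport bounds (\ref{akira}) and (\ref{antonio}) for ${\pi_{Q1R1}}^{-1}\GG\otimes{\pi_{IQ12R2}}^{-1}\widehat{\s}$ verbatim, and replace (\ref{alfonso}) by
\begin{equation}
SS(\K_{\{t<C\}\x(\cdots)}) = \bigl(\{(C,\zeta)\mid\zeta\leq 0\}\cup\{(t,0)\mid t<C\}\bigr)\x T^*_{Q_1\x Q_2\x\R_1\x\R_2}(Q_1\x Q_2\x\R_1\x\R_2).
\end{equation}
The non-characteristic/transversality condition of Proposition \ref{tenhom} still holds since the $t$-components never collide in the $a$-flip, so I get a tensor estimate analogous to (\ref{alex}), then push forward along $g$ and $\pi_{Q2R}$ using Propositions \ref{push} and \ref{projection} as before. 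The upshot: if $(q_2,p,z_2,1)\in SS(\GG\underset{Q}{\bu}\PC)$ then there are sequences with $\zeta_2\to 1$, $\zeta_2 p_2'\to p$, subject to $H(q_2,p_2')=t$ with either $t<C$ or ($t=C$ together with the extra non-positive $\zeta$ contribution forcing one to stay on the $t<C$ side in the limit); in all cases $H(q_2,p)=\lim H(q_2,p_2')\leq C$, and the boundary case $H(q_2,p)=C$ is excluded by the same closure/openness argument that identifies $\D_{B_C}$ with the left orthogonal of $D_{T^*Q\setminus B_C}$ rather than a strict containment in $\{H<C\}$.

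Having placed $\GG\underset{Q}{\bu}\PC$ in $\D_{B_C}(Q\x\R)={D_{T^*Q\setminus B_C}}^{\perp,l}$, the conclusion $Rhom(\GG\underset{Q}{\bu}\PC,\F)\cong 0$ for $\F\in\D_{T^*Q\setminus B_C}(Q\x\R)$ is immediate from the definition of the left orthogonal complement. I expect the main obstacle to be the careful bookkeeping of the $\zeta$-signs at the threshold level $t=C$: in Lemma \ref{right} the generator $\K_{[C,\infty)}$ contributes a \emph{nonnegative} $\zeta$ at $t=C$, which combines cleanly with the nonnegative $\zeta_2$; here $\K_{(-\infty,C)}$ contributes a \emph{nonpositive} $\zeta$ at $t=C$, so one must check that Proposition \ref{tenhom}(1) is still applicable (the $a$-antipode of one microsupport meets the other only in the zero section) and that the resulting estimate, after the two proper pushforwards, genuinely confines $H(q_2,p)$ to $\{\leq C\}$ and — via the orthogonality reformulation — to the open set $B_C$ at the level of the subset category. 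Everything else is a line-by-line transcription of Lemma \ref{right} with the inequality reversed, together with a citation of the semiorthogonal triple in Proposition \ref{positive} relativized to subset categories.
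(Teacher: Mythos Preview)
Your approach has a genuine gap at exactly the point you flag as ``the main obstacle.'' You correctly observe that mirroring the Lemma~\ref{right} computation yields only
\[
SS(\GG\underset{Q}{\bu}\PC)\cap\{\zeta>0\}\subset\rho^{-1}(\{H\leq C\}),
\]
i.e.\ the microsupport may touch the boundary level $\{H=C\}$. You then assert that ``the boundary case $H(q_2,p)=C$ is excluded by the same closure/openness argument that identifies $\D_{B_C}$ with the left orthogonal of $D_{T^*Q\setminus B_C}$.'' But no such argument exists: the identity $\D_{B_C}={D_{T^*Q\setminus B_C}}^{\perp,l}$ is the \emph{definition} of $\D_{B_C}$, and an object whose microsupport lies in $\rho^{-1}(\{H\leq C\})$ need not be left-orthogonal to $\D_{T^*Q\setminus B_C}$. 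Objects microsupported on the common boundary $\{H=C\}$ can pair nontrivially with objects in $\D_{\{H\geq C\}}$. So your reduction to a microsupport estimate for $\GG\underset{Q}{\bu}\PC$ is circular: the statement you want, $\GG\underset{Q}{\bu}\PC\in\D_{B_C}$, is \emph{equivalent} to Lemma~\ref{left}, not a consequence of the weaker containment $SS\subset\rho^{-1}(\overline{B_C})$.

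A decisive sign that something is missing: your argument never invokes completeness of the flow, whereas the paper stresses (in the Remark following the proof) that completeness is \emph{essential} for Lemma~\ref{left} and inessential for Lemma~\ref{right}, and even exhibits in Section~4 a non-complete $H$ for which $\EC(\F)\not\cong 0$, hence Lemma~\ref{left} fails. The paper's route is genuinely different from yours: it passes to the right adjoint $\EC$ of $(-)\underset{Q}{\bu}\PC$ and proves $\EC(\F)\cong 0$ for $\F\in\D_{T^*Q\setminus B_C}$ by a microsupport estimate on $R{\pi_{IQ1R1}}_* j^!\underline{Rhom}({\pi_{IQ12R2}}^{-1}\widehat{\s},{\pi_\sigma}^!\F)$. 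This estimate uses \emph{both} the constraint $H(q_2,p_2)\geq C$ coming from $\F$ and the constraint $H(q_2,p_2')=t<C$ coming from $\widehat{\s}$; completeness is what guarantees that $(q_2,p_2')=h_s(q_1,p_1')$ has a finite limit as $s\to-\tau$, $p_1'\to -p_1$, so that one can conclude $\lim p_2=\lim p_2'$ and derive the contradiction $t\geq C$. Your direct estimate on $\GG\underset{Q}{\bu}\PC$ never sees the hypothesis on $\F$, which is why it cannot close the gap at the boundary.
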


\begin{proof}[Proof of Lemma \ref{left}]
 Again let $Q_1=Q_2=Q$ be our base manifold and let $\R_1=\R_2=\R$ be the extra action variable. Let 
$$
{I^*}\x Q_1\x Q_2\x\R_1\x\R_2\xrightarrow{\pi_{IQ1R1}}{I^*}\x Q_1\x\R_1\xrightarrow{\pi_{Q1R1}}Q_1\x\R_1
$$
be successive projections onto the domain of $\GG$, and 
$$
\pi_{\sigma}: {I^*}\x Q_1\x Q_2\x\R_1\x\R_2\xrightarrow{\sigma}{I^*}\x Q_2\x\R\xrightarrow{\pi_{Q2R}}Q_2\x\R
$$
onto the domain of $\F$. Here the map $\sigma$ performs summation $\R_1\x\R_2\rightarrow\R$ and projects the other components to a point. The map $\pi_{Q2R}$ is the projection along ${I^*}$.

In order to describe the restriction to $(-\infty,C)$, let us define  
\begin{equation}
\begin{split}
(-\infty,C)\x Q_1\x Q_2\x\R_1\x\R_2 & \xrightarrow{j} {I^*}\x Q_1\x Q_2\x\R_1\x\R_2  \\
(-\infty,C)\x Q_1\x\R_1 & \xrightarrow{j_1} {I^*}\x Q_1\x\R_1\\
(-\infty,C)\x Q_2\x\R & \xrightarrow{j_2} {I^*}\x Q_2\x\R
\end{split}
\end{equation}
to be the corresponding open embeddings of $(-\infty,C)$ into ${I^*}$. Define the projection map ${I^*}\x Q_1\x Q_2\x\R_1\x\R_2\xrightarrow{\pi_{IQ12R2}}{I^*}\x Q_1\x Q_2\x\R_2$ onto the domain of $\widehat{\s}$. \\
\begin{equation}
\resizebox{\displaywidth}{!}{%
\xymatrix{
	&{I^*}\x Q_1\x Q_2\x\R_2 & {I^*}\x Q_1\x Q_2\x\R_1\x\R_2  \ar@/_/[lldd]_{\pi_{IQ1R1}} \ar@/^3pc/[rdd]^{\sigma} \ar@/^8pc/[rddd]^{\pi_{\sigma}}   \ar[l]_{\pi_{IQ12R2}}   & \\
	& & (-\infty,C)\x Q_1\x Q_2\x\R_1\x\R_2 \ar@{^{(}->}[u]^{j}  \ar@{-->}[d]^{\sigma} \ar@{-->}[dl]^{\pi_{IQ1R1}}&   \\
	{I^*}\x Q_1\x\R_1\ar[d]_{\pi_{Q1R1}}   & (-\infty,C)\x Q_1\x\R_1\ar@{_{(}->}[l]_{j_1} &(-\infty,C)\x Q_2\x\R\ar@{^{(}->}[r]^{j_2}  &    {I^*}\x Q_2\x\R  \ar[d]_{\pi_{Q2R}} \\
	Q_1\x \R_1 & & &  Q_2\x\R \\
	&
}
}
\end{equation}

Recall that $\F\in\D(Q_2\x\R)$ and $\widehat{\s}\in\D(I^*\x Q_1\x Q_2\x\R_2)$. We define a functor $\EC:\D(Q\x\R)\rightarrow\D(Q\x\R)$  that sends any sheaves $\F$ to
\begin{equation}
\EC(\F):= R{\pi_{Q1R1}}_* R{j_1}_* R{\pi_{IQ1R1}}_* j^! \underline{Rhom}({\pi_{IQ12R2}}^{-1}(\widehat{\s}),{\pi_{\sigma}}^!\F)
\end{equation}

\begin{lem}
The functor $\EC$ is the right adjoint functor of $(-)\bu\PC$.
\end{lem}
\begin{proof}
	
Let us unwrap $\PC$ all the way using the maps in the above diagram and apply the adjunction relations between their corresponding Grothendieck six functors. Let $\GG$ be an object of $\D(Q_1\x\R_1)$. One has by definition, 
\begin{equation}
\GG\bu\PC=\GG\underset{Q}{\bu}(\widehat{\s}\underset{t}{\circ}\K_{\{t< R\}})=(\GG\underset{Q}{\bu}\widehat{\s})\underset{t}{\circ}\K_{\{t< R\}}=R{\pi_{Q2R}}_!R{j_2}_!{j_2}^{-1}(\GG\underset{Q}{\bu}\widehat{\s}).
\end{equation}
By unwrapping $\GG\underset{Q}{\bu}\widehat{\s}$ one obtain
\begin{equation}\label{aston}
\GG\bu\PC=R{\pi_{Q2R}}_!R{j_2}_!{j_2}^{-1}[R\sigma_!({\pi_{IQ1R1}}^{-1}{\pi_{Q1R1}}^{-1}\GG\otimes{\pi_{IQ12R2}}^{-1}\widehat{\s})].
\end{equation}
Apply commutation relations ${j_2}^{-1}R\sigma_!=R\sigma_! j^{-1}$ and $R{j_2}_!R\sigma_!=R\sigma_!Rj_!$ and the composition $R{\pi_{Q2R}}_!R\sigma_!=R{\pi_\sigma}_!$ to (\ref{aston}) one get
\begin{equation}
\begin{split}
\GG\bu\PC=R{\pi_\sigma}_!Rj_! j^{-1}({\pi_{IQ1R1}}^{-1}{\pi_{Q1R1}}^{-1}\GG\otimes{\pi_{IQ12R2}}^{-1}\widehat{\s})\\
=R{\pi_\sigma}_!(Rj_! j^{-1}{\pi_{IQ1R1}}^{-1}{\pi_{Q1R1}}^{-1}\GG\otimes{\pi_{IQ12R2}}^{-1}\widehat{\s})\\
=R{\pi_\sigma}_!(Rj_!{\pi_{IQ1R1}}^{-1} {j_1}^{-1}{\pi_{Q1R1}}^{-1}\GG\otimes{\pi_{IQ12R2}}^{-1}\widehat{\s}).
\end{split}
\end{equation}
Therefore
\begin{equation}
\begin{split}
Rhom(\GG\bu\PC,\F)
=Rhom(R{\pi_\sigma}_!(Rj_!{\pi_{IQ1R1}}^{-1} {j_1}^{-1}{\pi_{Q1R1}}^{-1}\GG\otimes{\pi_{IQ12R2}}^{-1}\widehat{\s}),\F)\\
\cong Rhom(Rj_!{\pi_{IQ1R1}}^{-1} {j_1}^{-1}{\pi_{Q1R1}}^{-1}\GG\otimes{\pi_{IQ12R2}}^{-1}\widehat{\s},{\pi_\sigma}^!\F)\\
\cong Rhom(Rj_!{\pi_{IQ1R1}}^{-1} {j_1}^{-1}{\pi_{Q1R1}}^{-1}\GG,\underline{Rhom}({\pi_{IQ12R2}}^{-1}\widehat{\s},{\pi_{\sigma}}^!\F))\\
\cong Rhom(\GG,R{\pi_{Q1R1}}_* R{j_1}_* R{\pi_{IQ1R1}}_* j^!\underline{Rhom}({\pi_{IQ12R2}}^{-1}\widehat{\s},{\pi_{\sigma}}^!\F)).
\end{split}
\end{equation}

That is,
\begin{equation}
Rhom(\GG\bu\PC,\F)\cong Rhom(\GG,\EC(\F)).
\end{equation}

\end{proof}

We check that the abovementioned operations are compatible with $\D$.
\begin{lem}\label{Aaron}
For any $\F\in D(Q\x\R)$, one has $\EC(\F)\in\D(Q\x\R)$.
\end{lem}
\begin{proof}

Pick any $\GG\in D_{\leq0}(Q\x\R)$ then in $D(Q\x\R)$ one  has 
\begin{equation}
Rhom(\GG,\EC(\F))\cong Rhom(\GG\bu\PC,\F)\cong Rhom(0,\F)= 0.
\end{equation}
Therefore $\EC(\F)\in {D_{\leq0}}^{\perp,r}$. Apply the fact (\ref{D}) that $\D$ is both left and right orthogonal complement of $D_{\leq0}$.
	
\end{proof}

Now we prove that $\EC$ is microlocally vanishing outside the open sublevel set $B_C$.

\begin{lem}\label{vanishing}
	For all $\F\in\D_{T^*Q\setminus B} (Q\x\R)$, one has $\EC(\F)\cong 0$.
\end{lem}

\begin{proof}

Recall that 
\begin{equation}
\EC(\F):= R{\pi_{Q1R1}}_* R{j_1}_* R{\pi_{IQ1R1}}_* j^! \underline{Rhom}({\pi_{IQ12R2}}^{-1}(\widehat{\s}),{\pi_{\sigma}}^!\F).
\end{equation}

By Lemma \ref{Aaron}, it suffices to show that 
\begin{equation}
R{\pi_{IQ1R1}}_* j^! \underline{Rhom}({\pi_{IQ12R2}}^{-1}(\widehat{\s}),{\pi_{\sigma}}^!\F)\in D_{\leq0}((-\infty,C)\x Q_1\x\R_1).
\end{equation}

By Proposition \ref{pull} one has, up to the zero section $\{\zeta=0\}$, the following estimate:
\begin{equation}
SS({\pi_{\sigma}}^!\F)\subset\{(t,0;q_1,0;q_2,\zeta  p_2;z_1,\zeta;z_2,\zeta)| \zeta>0 ,H(q_2,p_2)\geq C  \} .
\end{equation}

On the other hand, recall from $(\ref{akira})$ one has
\begin{equation}
\begin{split}
SS( {\pi_{IQ12R2}}^{-1}\widehat{\s})\subset \{(t,\zeta_2 s;q_1,\zeta_2 p_1';q_2,\zeta_2 p_2';z_1,0;z_2,\zeta_2)|\\
\zeta_2\geq0, H(q_1,p_1')=H(q_2,p_2')=t, (q_2,p_2')=h_s(q_1,p_1') \}
\end{split}
\end{equation}

It is clear that 
\begin{equation}
SS( {\pi_{IQ12R2}}^{-1}\widehat{\s})\bigcap SS({\pi_{\sigma}}^!\F) \subset T^*_{(I^*\x Q_1\x Q_2\x\R_1\x\R_2)}(I^*\x Q_1\x Q_2\x\R_1\x\R_2).
\end{equation}

Thus by Proposition \ref{tenhom}, one obtain
\begin{equation}\label{austin}
\begin{split}
SS(\underline{Rhom}({\pi_{IQ12R2}}^{-1}\widehat{\s},{\pi_{\sigma}}^!\F))
\subset SS( {\pi_{IQ12R2}}^{-1}\widehat{\s})^a + SS({\pi_{\sigma}}^!\F)\\
\subset \{(t,-\zeta_2 s;q_1,-\zeta_2 p_1';q_2,\zeta p_2-\zeta_2 p_2';z_1,\zeta;z_2,\zeta-\zeta_2)
|\zeta>0,\zeta_2\geq0, \\
H(q_2,p_2)\geq C, H(q_1,p_1')=H(q_2,p_2')=t,(q_2,p_2')=h_s(q_1,p_1')  \}.
\end{split}
\end{equation}

Note that $R{\pi_{IQ1R1}}_* j^! \underline{Rhom}({\pi_{IQ12R2}}^{-1}(\widehat{\s}),{\pi_{\sigma}}^!\F)$ is an object of $D((-\infty,C)\x Q_1\x\R_1)$. We pick an arbitrary vector $v$ in $T^*_{\zeta_1>0}((-\infty,C)\x Q_1\x\R_1)$ that satisfies
\begin{equation}\label{alba}
v=(t,\tau;q_1,p_1;z_1,1)\in SS(R{\pi_{IQ1R1}}_* j^! \underline{Rhom}({\pi_{IQ12R2}}^{-1}(\widehat{\s}),{\pi_{\sigma}}^!\F).
\end{equation}

By Proposition \ref{projection}, there exist sequences $\{-\zeta_2 s\}$,$\{-\zeta_2 p_1'\}$,$\{\zeta p_2 -\zeta_2 p_2'\}$, $\{\zeta\}$, $\{\zeta-\zeta_2\}$ subject to the conditions of (\ref{austin}) such that they and their base coordinates approach to the corresponding entries of $(t,\tau;q_1,p_1;-,0;z_1,1;-,0)$, the extension of $v$ by the zero section, as follows:
\begin{equation}\label{ashley} 
\begin{split}
\{-\zeta_2 s\}&\rightarrow \tau\\
\{-\zeta_2 p_1'\}&\rightarrow p_1\\
\{\zeta p_2 -\zeta_2 p_2'\}&\rightarrow 0\\
\{\zeta\}&\rightarrow 1\\
\{\zeta-\zeta_2\}&\rightarrow 0
\end{split}
\end{equation}

From (\ref{ashley}) we deduce that $\{s\} \rightarrow -\tau$, $ \{p_1'\}\rightarrow -p_1 $ and $\{p_2-p_2'\}\rightarrow 0$. Since the sequences are subject to the condition $(q_2,p_2')=h_s(q_1,p_1')$, we deduce that $(q_2,\lim\{ p_2'\})=h_{-\tau}(q_1,-p_1)$ is finite from the assumption of $H$ generating complete Hamiltonian flow. Therefore $\lim \{p_2\}$ is finite, since $\lim \{p_2-p_2'\}=0$ and $\lim \{p_2'\}$ is finite. The other conditions from (\ref{austin}) then imply that
\begin{equation}\label{apple}
t = H(q_2,\lim\{p_2'\})=H(q_2,\lim \{p_2\})\geq C,
\end{equation}
which contradicts to the assumption
\begin{equation}
(t,\tau;q_1,p_1;z_1,1)=v\in T^*_{\zeta_1>0}((-\infty,C)\x Q_1\x\R_1),
\end{equation} 
that is $t<C$.

\end{proof}
This finishes the proof of Lemma \ref{left}.

\end{proof}
By the general property $SS(\F\oplus\GG)=SS(\F)\cup SS(\GG)$, all the subcategories mentioned above are thick (closed under direct summands). Therefore Lemma \ref{right} and Lemma \ref{left} together imply Theorem \ref{projector}.

\begin{rmk} 
In fact Lemma \ref{right} does not involve the assumption of complete flow. Nevertheless, such assumption is essential in Lemma \ref{left}, where the semiorthogonality needs an additional proof. Note that our argument in proving Lemma \ref{vanishing} is of $C^0$ nature. We use only the continuity of the function $H$ and its flow $h$.
\end{rmk}

\section{A Non-example}

What if the Hamiltonian flow fails to be complete? 

Let us consider the famous Cherry Hamiltonian \cite{Cherry25} on $T^*\R^2$ with parameter $\mu$,
\begin{equation}\label{cherry}
H_{ch}(q,p)=\f{1}{2}({q_1}^2+{p_1}^2)-({q_2}^2+{p_2}^2)+\mu[q_2({q_1}^2-{p_1}^2)-2q_1p_1p_2]
\end{equation}

The Hamiltonian equations of the flow are
\begin{equation}\label{cherryeqn}
\begin{split}
\dot{q_1}&=p_1-2\mu q_2 p_1 -2\mu q_1 p_2 \\
\dot{p_1}&=-q_1 -2\mu q_1q_2+2\mu p_1p_2 \\
\dot{q_2}&=-2p_2 -2\mu q_1p_1 \\
\dot{p_2}&=2q_2 +\mu {p_1}^2-\mu {q_1}^2
\end{split}
\end{equation}

The Cherry Hamiltonian $H_{ch}$ is an example of unstable equilibrium which linearization is stable. In fact, the linear part (corresponding to $\mu=0$) of the Cherry Hamiltonian (\ref{cherry}) is just the superposition of two simple harmonic oscillators. 

For $\mu>0$, on the non-compact level surface $H_{ch}^{-1}(0)$ there is a family of solutions of (\ref{cherryeqn}), parametrized by two parameters $a$ and $b$:
\begin{equation}\label{cherrysol}
\begin{split}
q_1(s)=\f{-\sqrt{2}}{2\mu s + a}\sin(s+b)\quad,&\quad p_1(s)=\f{\sqrt{2}}{2\mu s + a}\cos(s+b)\\
q_2(s)=\f{1}{2\mu s +a}\sin(2s+2b)\quad,& \quad p_2(s)=\f{1}{2\mu s+a}\cos(2s+2b).
\end{split}
\end{equation}

The flow on this $H_{ch}^{-1}(0)$ cannot be made complete. For example when $(a,b)=(0,0)$, the solution (\ref{cherrysol}) is a curve defined for $s\neq0$:
\begin{equation}\label{cherrycurve}
\begin{split}
q_1(s)=\f{-\sin(s)}{\sqrt{2}\mu s}\qquad,&\qquad p_1(s)=\f{\cos(s)}{\sqrt{2}\mu s}\\
q_2(s)=\f{\sin(2s)}{2\mu s}\qquad,& \qquad p_2(s)=\f{\cos(2s)}{2\mu s}
\end{split}
\end{equation}

When $s$ approaches to $\pm \infty$, the solution curve (\ref{cherrysol}) swirls around and approaches to the origin. When $s$ approaches to $0$, its $q$-components become stable but the $p$-components totally blow up, in a way depending on the sign of $s$.

Let us show that $H_{ch}$ violates the projector property by violating the middle equality in (\ref{apple}). For each $n\in\mathbb{N}$, consider two sequences $y_n=(\sqrt{n},\sqrt{n})$ and $y'_n=(\sqrt{n+1},\sqrt{n})$ in $\R^2$ satisfying
\begin{equation}
\begin{split}
\lim \{y_n\}=\lim \{y'_n\}&=(+\infty,+\infty),\\
\lim \{y'_n-y_n\} &= (0,0).
\end{split}
\end{equation}

We choose $\mu>0$ and $x=(q_1,q_2)\in \R^2$ such that $\f{1}{2}+2\mu q_1+\mu q_2=0$. Then from (\ref{cherryeqn}) one has
\begin{equation}
\begin{split}
H_{ch}(x,y_n)&=\f{1}{2}q_1^2-q_2^2+\mu q_1^2q_2-(\f{1}{2}+2\mu q_1+\mu q_2)n\\
& = -2\mu q_1^3 -\f{2}{\mu}q_1 -\f{1}{4\mu^2}
\end{split}
\end{equation}
and
\begin{equation}
\begin{split}
H_{ch}(x,y'_n)&=\f{1}{2}q_1^2-q_2^2+\mu q_1^2q_2+1+(2\mu q_1)[(n+1)-\sqrt{n(n+1)}]\\
& = -2\mu q_1^3 -\f{2}{\mu}q_1 -\f{1}{4\mu^2} +1 + (2\mu q_1)[(n+1)-\sqrt{n(n+1)}]
\end{split}
\end{equation}

It follows that when $n\rightarrow \infty$, one has 
\begin{equation}
\lim H_{ch}(x,y_n) = -2\mu q_1^3 -\f{2}{\mu}q_1 -\f{1}{4\mu^2},
\end{equation}
while on the other hand
\begin{equation}
\lim H_{ch}(x,y'_n) = -2\mu q_1^3 -\f{2}{\mu}q_1 -\f{1}{4\mu^2} +1 + \mu q_1.
\end{equation}

The term $H_{ch}(x,y_n)$ is independent of $n\in\mathbb{N}$ and takes every real value as $x$ varies, and the term $\lim H_{ch}(x,y'_n)$ is always different from $\lim H_{ch}(x,y_n)$ unless $x=(\f{-1}{\mu},\f{3}{2\mu})$. This allows us to choose a sheaf $\F\in\D_{T^*\R^2 \setminus B_C}(\R^2\x\R)$ supported on $\{\f{1}{2}+2\mu q_1+\mu q_2=0\}$, and a vector 
\begin{equation}
v\in T^*_{\zeta_1>0} \cap SS(R{\pi_{IQ1R1}}_* j^! \underline{Rhom}({\pi_{IQ12R2}}^{-1}(\widehat{\s}),{\pi_{\sigma}}^!\F)
\end{equation}
as in (\ref{alba}), which causes $\EC(\F)\neq0$.

\bibliographystyle{amsplain}

\begin{thebibliography}{100}


	\bibitem{Cherry25} T. M. Cherry,
	\emph{Some examples of trajectories defined by differential equation of a generalized dynamical type}, Trans. Cambridge Phil. Soc. 23 (1925) 169-200.
	
	\bibitem{Chiu17} S.-F. Chiu, \emph{Nonsqueezing property of contact balls}, Duke Math. J. Vol 166 (2017), No. 4, pp. 605-655 .
	
	\bibitem{GKS12}
	S. Guillermou, M. Kashiwara and P. Schapira,
	\emph{Sheaf quantization of Hamiltonian isotopies and applications to non displaceability problems.}
	Duke Math. J. Vol 161 (2012), pp.201--245.
		
	\bibitem{KO21}
	M. Kawasaki and R. Orita,
	\emph{Rigid fibers of integrable systems on cotangent bundles},
	 J. Math. Soc. Japan Advance Publication 1-19, October, 2021. https://doi.org/10.2969/jmsj/84278427
		
	\bibitem{KS90}
	M. Kashiwara and P. Schapira,
	\emph{Sheaves on Manifolds}, 
	Grundlehren der Math. Wiss. \textbf{292} Springer-Verlag (1990).

	\bibitem{MVZ12}
	A. Monzner, N. Vichery, and F. Zapolsky,
	\emph{Partial quasimorphisms and quasistates on cotangent bundles, and symplectic homogenization},
	Journal of Modern Dynamics,	2012, 6(2): 205-249.
	doi: 10.3934/jmd.2012.6.205 
	
	\bibitem{Tamarkin18} D. Tamarkin, \emph{Microlocal conditions for non-displaceability}, in Algebraic and analytic microlocal analysis, edited by M. Hitrik et al., Springer Proceedings	in Mathematics and Statistics 269 (2018), pp. 99-223.

  
\end{thebibliography}

\end{document}